\newtheorem{thm}{Theorem}[section]
\newtheorem{cor}[thm]{Corollary}
\newtheorem{lem}[thm]{Lemma}
\newtheorem{prop}[thm]{Proposition}
\theoremstyle{definition}
\newtheorem{defn}[thm]{Definition}
\theoremstyle{remark}
\newtheorem{exm}[thm]{Example}
\newtheorem{rem}[thm]{Remark}
\numberwithin{equation}{section}
\newcommand{\R}{\mathbb{R}}
\newcommand{\N}{\mathbb{N}}
\newcommand{\Z}{\mathbb{Z}}
\newcommand{\T}{\mathbb{T}}
\newcommand{\C}{\mathbb{C}}
\newcommand{\Cm}{\mathbb{C}^*}
\newcommand{\Rm}{\mathbb{R}_+}
\newcommand{\fL}{\mathcal{L}}
\newcommand{\Zc}{\mathcal{Z}}
\newcommand{\Hc}{\mathcal{H}}
\newcommand{\Nc}{\mathcal{N}}
\newcommand{\tw}{\circledast}
\newcommand{\tc}{\circledast_\T}
\newcommand{\Sm}{\mathcal{S}}
\newcommand{\Zb}{\Zc_{b}^2(G,\Cm)}
\newcommand{\Zbs}{\Zc_{bs}^2(G,\Cm)}
\newcommand{\Zbw}{\Zc_{bw}^2(G,\Cm)}
\newcommand{\ZTb}{\Zc_{b}^2(G,\mathbb{T})}
\newcommand{\om}{\omega}
\newcommand{\Om}{\Omega}
\newcommand{\sg} {\sigma}
\newcommand{\GT}{\widetilde{G}}
\newcommand{\supp}{\text{supp}}
\begin{document}

\title[Twisted Orlicz algebras, I]
{Twisted Orlicz algebras, I}

\author[Serap \"{O}ztop]{Serap \"{O}ztop}
\address{Department of Mathematics, Faculty of Science, Istanbul University, Istanbul, Turkey}
\email{oztops@istanbul.edu.tr}

\author{Ebrahim Samei}
\address{Department of Mathematics and Statistics, University of Saskatchewan, Saskatoon, Saskatchewan, S7N 5E6, Canada}
\email{samei@math.usask.ca}

\footnote{{\it Date}: \today.

2000 {\it Mathematics Subject Classification.} Primary 46E30,  43A15, 43A20; Secondary 20J06.

{\it Key words and phrases.} Orlicz spaces, Young functions, 2-cocycles and 2-coboundaries, locally compact groups, twisted convolution, weights, groups with polynomial growth, symmetry.

The second named author was supported by NSERC Grant no. 409364-2015 and
2221-Fellowship Program For Visiting Scientists And Scientists On Sabbatical Leave
from Tubitak, Turkey.}




\maketitle

\begin{abstract}
Let G be a locally compact group, let $\Om:G\times G\to \Cm$ be a 2-cocycle, and let $\Phi$ be a Young function. In this paper, we consider the Orlicz space $L^\Phi(G)$ and investigate
its algebraic property under the twisted convolution $\tw$ coming from $\Om$.
We find sufficient conditions under which $(L^\Phi(G),\tw)$ becomes a Banach algebra or
a Banach $*$-algebra; we call it a {\it twisted Orlicz algebra}. Furthermore, we study its harmonic analysis properties, such as symmetry, existence of functional calculus, regularity, and having Wiener property, mostly for the case when $G$ is a compactly generated group of polynomial growth.
We apply our methods to several important classes of polynomial as well as subexponential weights and demonstrate that our results could be applied to variety of cases.
\end{abstract}


In harmonic analysis, an important object related to a locally compact group $G$ and its (left) Haar measure $ds$ is its group algebra $L^1(G):=L^1(G,ds)$. Since the Haar measure is invariant under the left translation,
the group actions on $G$ extend to $L^1(G)$ so that it becomes a Banach $*$-algebra with respect to the convolution 
$$ (f*g)(t)=\int_G f(s)g(s^{-1}t) ds,$$
and the involution
$$f^*(t)=\overline{f(t^{-1})}\Delta(t^{-1}),$$
where $\Delta$ is the modular function of $G$.
The properties of $L^1(G)$ has been well-studied over the last couple of decades
and one could deduce lots information about $G$ from $L^1(G)$ and vice versa. For instance, the unitary representations of $G$ are in one-to-one correspondence with
the non-degenerate bounded $*$-representations of $L^1(G)$. One could also consider
the ``twisted" convolution and involution on $L^1(G)$, i.e.
$$ (f\tw g)(t)=\int_G f(s)g(s^{-1}t) \Om_\T(s,s^{-1}t) ds,$$
and
$$f^*(t)=\overline{f(t^{-1})}\Delta(t^{-1})\overline{\Om_\T(t,t^{-1})},$$
where $\Om_\T$ is a 2-cocycle on $G$ with values in the unit circle $\T$. These concepts appear naturally when one consider the ``projective" unitary representations of $G$,
as well as, in other areas of mathematics such as non-commutative geometry or Gabor analysis (see, for example, \cite{EL}, \cite{GL}, \cite{M}).

A generalization of $L^1(G)$ is the $L^p(G)$ space for $1\leq p< \infty$ 
which is a Banach space, even a Banach module over $L^1(G)$ with respect to the convolution but it is a Banach algebra only when $G$ is compact \cite{S}. On the other hand, a very natural phenomenon occurring in harmonic analysis is the appearance of a ``weight" on the group or a ``weighted norm" on the algebra in computations. A weight $\om$ on $G$ is a locally bounded measurable function from $G$ into the positive reals. For such a weight, one can extend the construction of $L^p(G)$ to the ``weighted" $L_\om^p(G)$, i.e.
$$L_\om^p(G):=\{f: f\om \in L^p(G)\ \text {and}\ \|f\|_\om=\|f\om\|_p \}.$$
These spaces have various properties and numerous applications in harmonic analysis.
For instant, by applying the Fourier transform, we know that Sobolev spaces $W^{k,2}(\T)$ are nothing but certain weighted $l_\om^2(\Z)$ spaces.

A particular aspect of the behavior of weighted $L^p$ spaces over locally compact groups is that they could form an algebra with respect to the convolution! More precisely, when $p=1$ and $\om$ is submultiplicative, it follows routinely that $L_\om^1(G)$ is a Banach algebra. Even though, this may not hold in general if $p>1$, there are sufficient conditions under which $L_\om^p(G)$ is a Banach algebra with respect to the convolution.
This was first shown by J. Wermer for $G=\R$ in \cite{JW} and
Yu. N. Kuznetsova later extended it to general locally compact groups. She has also studied some important properties of $L_\om^p(G)$ as a Banach algebra such as the existence of an approximate identity and, for an abelian $G$, a description of their the maximal ideal space (see, \cite{K1}, \cite{K2}, and the references therein). Moreover, in \cite{KM} and together with C. Molitor-Braun, they studied other properties such as symmetry, existence of functional calculus and having the Wiener property.

In the mathematical analysis, Orlicz space is a type of function space which generalizes the $L^p$ spaces significantly. Besides the $L^p$ spaces, a variety of function spaces arises naturally in analysis in this way such as $L \log^+ L$ which is a Banach space related to Hardy-Littlewood maximal functions. They could also contain certain Sobolev spaces as subspaces. Similar to $L^p$ spaces,
one could also consider weighted Orlicz spaces and study their properties. Very recently, A. Osan\c{c}l{\i}ol  and S. \"{O}ztop have looked at weighted Orlicz spaces as Banach algebras with respect to convolution (\cite{OO}). They found sufficient conditions for which the corresponding space becomes an algebra and studied their properties such as existence of an approximate identity and the spectrum of the algebra when the underlying group is abelian. Their work, in part, extends some of the results of Kuznetsova to a wider classes of algebras.

Our goal in this paper is to continue studying convolution and possible algebraic structure on Orlicz spaces but in a more general setting that the one presented in \cite{OO}. In fact, we would like to have a theory that embeds everything we discuss above. We start with considering the twisted convolution coming from 2-cocycles with values in $\C^*$, the multiplicative group of complex numbers. We restrict ourselves to those 2-cocycles $\Om$ for which $|\Om|$ is a 2-coboundary determined by a submultiplicative weight $\om$. This approach has two advantages! First of all, it allows us to systematically and simultaneously study twisted convolution coming from 2-cocycles with values in $\T$ as well as the weighted spaces coming from $\om$. Secondly, the formulation we find on $\Om$ ensuring that the twisted Orlicz space becomes as algebra is that $|\Om|$ must satisfies a certain composition (see \eqref{Eq:2-cocycle bdd sum}) and this is much better presented and understood if the twisted convolution is viewed as the one coming from a 2-cocycle (Section \ref{S:Twisted Orlicz alg}). We call the algebras we obtain the {\it twisted Orlicz algebras}. When $\om$ is a symmetric weight, we show that there is a natural involutive structure on twisted Orlicz algebras over unimodular locally compact groups. We study their symmetry as Banach $*$-algebras.
and present a method to verify whether they are symmetric (Section \ref{S:symmetry}).

We apply our methods to study twisted Orlicz algebras over compactly generated groups of polynomial growth. We follow the line of work in \cite{KM} and study various harmonic analysis properties of these algebras such as symmetry, existence of functional calculus and having the Wiener property. We present three concrete and important classes of polynomial and subexponential weights on these groups and obtain a large family of
symmetric twisted Orlicz algebras. This demonstrate that our methods can be applied to vast variety of cases and extend the results even in the classical situation. For instance, it was left as open problems in \cite{KM} whether weighted $L^p$-algebras with subexponential weights over non-abelian groups are symmetric or have the Wiener property. We apply our methods and obtain affirmative answers in a much more general setting to these questions (Sections \ref{S:Symm-twisted Orlicz alg-PG groups}, \ref{S:subexp weights-symm Orlicz space}, and \ref{S:Functional calculus, Wiener property, and minimal ideals}).

We finish by pointing out that throughout this paper, we concern ourselves with the
theory ``bounded multiplications" for Banach algebras and Banach modules, as opposed to ``contractive multiplications". Also weights for us are ``weakly submultiplicative" as opposed to ``submultiplicative".  We have also investigated existence of approximate identities as well as cohomological properties of twisted Orlicz algebras where we will present them in a subsequent paper.


\section{Preliminaries}

In this section, we give some definitions and state some technical
results that will be crucial in the rest of this paper. In this
paper, $G$ denotes a locally compact group with a fixed left Haar
measure $ds$.

\subsection{Orlicz Spaces}

In this section, we recall some facts concerning Young functions and Orlicz spaces. Our main reference is \cite{rao}.

A nonzero function $\Phi:[0,\infty) \to[0,\infty]$ is called a Young
function if $\Phi$ is convex, $\Phi(0)=0$, and $\lim_{x\to \infty} \Phi(x)=\infty$. For a Young function $\Phi$, the complementary function $\Psi$ of
$\Phi$ is given by
\[\Psi(y)=\sup\{xy-\Phi(x):x\ge0\}\quad(y\geq 0).\]
It is easy to check that $\Psi$ is also a Young function. Also, if $\Psi$ is the complementary function of $\Phi$, then $\Phi$ is
the complementary of $\Psi$ and $(\Phi,\Psi)$ is called a
complementary pair. We have the Young inequality
\[xy\le\Phi(x)+\Psi(y)\quad(x,y\ge0)\]
for complementary functions $\Phi$ and $\Psi$. By our
definition, a Young function can have the value $\infty$ at a point,
and hence be discontinuous at such a point. However, we always consider the pair of complementary Young
functions $(\Phi,\Psi)$ with $\Phi$ being real-valued and continuous on $[0,\infty)$ and positive on $(0,\infty)$. Note that even though $\Phi$ is continuous,
it may happen that $\Psi$ is not continuous.

Now suppose that $G$ is a locally compact group with a fixed Haar measure $ds$ and $(\Phi,\Psi)$ is a complementary pair of Young functions. We define
\begin{align}\label{Eq:Orlicz defn-0}
\mathcal{L}^\Phi(G)=\left\{f:G\to\C:f \ \text{is measurable and}\  \int_G\Phi(|f(s)|)\,ds <\infty
\right\}.
\end{align}
Since $\mathcal{L}^\Phi(G)$ is not always a linear space, we define the the Orlicz space $L^\Phi(G)$ to be
\begin{align}\label{Eq:Orlicz defn}
L^\Phi(G)=\left\{f:G\to\C:\int_G\Phi(\alpha|f(s)|)\,ds <\infty
\mbox{ for some }\alpha>0\right\},
\end{align}
where $f$ indicates a member in the equivalence classes of measurable functions with respect to the Haar measure $ds$. Then the Orlicz space is a Banach space under the (Orlicz) norm $\|\cdot\|_\Phi$
defined for $f\in L^\Phi(G)$ by
\begin{align}\label{Eq:Orlicz norm}
\|f\|_\Phi=\sup\left\{\int_G|f(s)v(s)|\,ds: \int_G\Psi(|v(s)|)\,ds \le1\right\},
\end{align}
where $\Psi$ is the complementary function to $\Phi$. One can also
define the (Luxemburg) norm $N_\Phi(\cdot)$ on $L^\Phi(G)$ by
\begin{align}\label{Eq:Orlicz Luxemburg defn}
N_\Phi(f)=\inf\left\{k>0:\int_G\Phi\left(\frac{|f(s)|}{k}\right)
\,ds \le1\right\}.
\end{align}
It is known that these two norms are equivalent; that is,
\begin{align}\label{Eq:Orlicz norm-Luxemburg relation}
N_\Phi(\cdot)\le \|\cdot\|_\Phi\le2 N_\Phi(\cdot)
\end{align}
and
\begin{align}\label{Eq:Orlicz norm-defn relation}
N_\Phi(f)\le1 \ \ \text{if and only if}\ \ \int_G\Phi(|f(s)|)\,ds \le1.
\end{align}
Let $\Sm^\Phi(G)$ be the closure of the linear
space of all step functions in $L^\Phi(G)$. Then $\Sm^\Phi(G)$ is a Banach space
and contains $C_c(G)$, the space of all continuous functions on $G$ with compact support, as a dense subspace \cite[Proposition 3.4.3]{rao}. Moreover, $\Sm^\Phi(G)^*$, the dual of  $\Sm^\Phi(G)$, can be identified with $L^\Psi(G)$ in a natural way \cite[Theorem 4.1.6]{rao}. Another useful characterization of $\Sm^\Phi(G)$ is that
$f\in \Sm^\Phi(G)$ if and only if for every $\alpha>0$, $\alpha f\in \mathcal{L}^\Phi(G)$ \cite[Definition 3.4.2 and Proposition 3.4.3]{rao}.

A Young function $\Phi$ satisfies the $\Delta_2$ condition if there
exist a constant $K>0$ and $x_0\geq 0$ such that $\Phi(2x)\le K\Phi(x)$
for all $x\ge x_0$. In this case we write
$\Phi\in\Delta_2$.
If $\Phi\in\Delta_2$, then it follows that $L^\Phi(G)=\Sm^\Phi(G)$ so that  $L^\Phi(G)^*=L^\Psi(G)$ \cite[Corollary 3.4.5]{rao}. If, in addition, $\Psi\in\Delta_2$, then the Orlicz space $L^\Phi(G)$ is a reflexive Banach space.


We will frequently use the (generalized) H\"{o}lder's inequality for Orlicz spaces
\cite[Remark 3.3.1]{rao}.
More precisely, for any complementary pair of Young functions $(\Phi,\Psi)$
and any $f\in L^\Phi(G)$ and $g\in L^\Psi(G)$, we have
\begin{align}\label{Eq:Holder inequality}
\|fg\|_1:=\int_G |f(s)g(s)|ds \leq \min\{N_\Phi(f)\|g\|_\Psi , \|f\|_\Phi N_\Psi(g)\}.
\end{align}
This, in particular, implies that $fg\in L^1(G)$.

For $1\leq p<\infty$ and the Young function $\Phi(x)=\frac{x^p}{p}$,
the space $L^{\Phi}(G)$ becomes the Lebesgue space
$L^p(G)$ and the norm $\|\cdot\|_{\Phi}$ is equivalent to the
classical norm $\|\cdot\|_{p}$. If $p=1$, then the
complementary Young function of $\Phi(x)=x$ is
\begin{equation}\label{psisonsuz}
\Psi(y)=\left\{\begin{array}{ll}
0      & \text{if } 0\le y\le1,\\
\infty & \text{otherwise.}
\end{array}\right.
\end{equation}
and $\|f\|_{\Phi}=\|f\|_{1}$ for all $f\in L^1(G)$ since
$\int_G\Psi(|v(s)|)ds \leq 1$ if and only if $|v(s)|\le1$ locally almost
everywhere on $G$. Note that function $\Psi$ defined in
\eqref{psisonsuz} is still a Young function as in the first
definition of the Young function. If $1<p<\infty$, then the complementary
Young function of $\Phi(x)=\frac{x^p}{p}$ is $\Psi(y)=\frac{y^q}{q}$,
where $q$ is the conjugate of $p$, i.e. $\frac{1}{p}+\frac{1}{q}=1$.


\subsection{2-Cocycles and 2-Cobounaries}

Throughout this article, we use the notation $\Cm$ to denote the multiplicative group of complex numbers, i.e. $\Cm=\C \setminus \{0\}$,
$\Rm$ to be multiplicative group of positive real numbers, and $\T$ to be the unit circle in $\C$.

\begin{defn}\label{D:2-cocycle defn}
Let $G$ and $H$ be locally compact groups such that $H$ is abelian. A {\it (normalized) 2-cocycle on $G$ with values in $H$} is a Borel
measurable map $\Om: G\times G \to H$ such that
\begin{align}\label{Eq:2-cocycle relation}
\Om(r,s)\Om(rs,t)=\Om(s,t)\Om(r,st) \ \ \  (r,s,t \in G)
\end{align}
and
\begin{align}\label{Eq:2-cocycle relation normalization}
\Om(r,e_G)=\Om(e_G,r)=e_H \ \ \ (r\in G).
\end{align}
The set of all normalized 2-cocycles will be denoted by $\Zc^2(G,H)$.
\end{defn}
If $\om: G \to H$ is measurable with $\om(e_G)=e_H$, then it is easy to see that the mapping
$$(s,t)\mapsto \om(st)\om(s)^{-1}\om(t)^{-1}$$
satisfies \eqref{Eq:2-cocycle relation} and \eqref{Eq:2-cocycle relation normalization}. Hence it is a 2-cocycle; such maps are called 2-coboundaries. The set of 2-coboundaries will be denoted by $\Nc^2(G,H)$.
It is easy to check that $\Zc^2(G,H)$ is an abelian group under the product
$$\Om_1 \Om_2 (s,t)=\Om_1(s,t)\Om_2(s,t) \ \ (s,t\in G),$$
and $\Nc^2(G,H)$ is a (normal) subgroup of $\Zc^2(G,H)$. This, in particular, implies that
$$\Hc^2(G,H):=\Zc^2(G,H)/\Nc^2(G,H)$$ turns into a group; This is called the 2$^{nd}$ group cohomology of $G$ into $H$
with the trivial actions (i.e. $s\cdot \alpha=\alpha\cdot s=\alpha$ for all $s\in G$ and $\alpha \in H$).

We are mainly interested in the cases when $H$ is $\Cm$, $\Rm$ or $\T$. One essential observation is that
we can view $\Cm=\Rm \T$ as a (pointwise) direct product of groups. Hence, for any 2-cocycle
$\Om$ on $G$ with values in $\Cm$ and $s,t\in G$, we can (uniquely) write
$\Om(s,t)=|\Om(s,t)| e^{i\theta}$ for some $0\leq \theta <2\pi$. Therefore, if we put
\begin{align}
|\Om|(s,t):=|\Om(s,t)| \ \ \text{and}\ \ \Om_\T(s,t):=e^{i\theta},
\end{align}
then $\Om=|\Om|\Om_\T$ (in a unique way) and the mappings $|\Om|$ and $\Om_\T$ are 2-cocycles on $G$ with values in $\Rm$ and $\T$, respectively.


\section{Twisted group algebra}\label{S:Twisted group alg}

In this section, we present and summarize what we need from the theory of twisted group algebras. To start, we must restrict ourselves
to certain subgroups of 2-cocycles for which twisted group algebras can be defined. Throughout this section, $G$ is a locally compact group
with a fixed left Haar measure $ds$.


\begin{defn}\label{D:bound 2 cocycles}
We denote $\Zb$ to be the group of {\bf bounded 2-cocycles on $G$ with values in $\Cm$}
which consists of all elements $\Om\in \Zc^2(G,\Cm)$ satisfying the following conditions:\\
$(i)$ $\Om \in L^\infty(G\times G)$;\\
$(ii)$ $\Om_\T$ is continuous.\\
We also define $\Zbw$ to be the subgroup of $\Zb$ consisting of elements $\Om \in \Zb$ for which
$$|\Om|(s,t)=\frac{\om(st)}{\om(s)\om(t)} \ \ \ (s,t\in G),$$ where $\om : G \to \R_+$ is a locally integrable measurable function
with $\om(e)=1$ and $1/\om\in L^\infty(G)$. In this case, we call $\om$ a {\bf weight} on $G$ and say that $|\Om|$ is the {\bf 2-coboundary determined by} $\om$, or
alternatively, $\om$ is the {\bf weight associated to} $|\Om|$.
\end{defn}

Now suppose that $\Om\in \Zb$ and $f$ and $g$ are measurable functions on $G$. If there is a $\sigma$-finite measurable subset $E$ of $G$ such that
$f=g=0$ on $G\setminus E$, then we define the {\bf twisted convolution of $f$ and $g$ under $\Om$} to be
\begin{align}\label{Eq:twisted convolution}
(f\tw g) (t)=\int_G f(s)g(s^{-1}t)\Om(s,s^{-1}t)ds  \ \ \ (t\in G)
\end{align}
It follows from standard measure theory that, since $E\times E$ is $\sigma$-finite and its complement in $G\times G$ is a null set, $f\tw g$ is measurable on $G$.
Moreover, if we let $\Delta$ to be the modular function on $G$
and define $(s,t\in G)$
\begin{align}\label{Eq:twisted convolution-left right translation}
(\delta_s\tw f)(t)=f(s^{-1}t)\Om(s,s^{-1}t) \ \ \text{and}\ \ (f\tw \delta_s)(t)=f(ts^{-1})\Delta(s^{-1})\Om(ts^{-1},s),
\end{align}
then both functions $\delta_s\tw f$ and $f\tw \delta_s$ are measurable on $G$ and
\begin{align}\label{Eq:twisted convolution-operator translation}
(f\tw g) (t)=\int_G f(s)(\delta_s\tw g)(t)ds=\int_G g(s) (f \tw \delta_s)(t)ds  \ \ \ (t\in G).
\end{align}
It now follows routinely (using Fubini's theorem) that for every $f,g\in L^1(G)$,
$f\tw g\in L^1(G)$ with $\|f\tw g\|_1\leq \|\Om\|_\infty \|f\|_1\|g\|_1$.
We conclude that $(L^1(G),\tw)$ becomes a Banach algebra; it is called
the {\bf twisted group algebra}.

We would like to make some clarification with regard to our version of twisted group algebras. Even though in early stages in the works of Leptin and others,
the twisted group actions were defined in much more general setting (see, for example,
\cite{EL} and \cite{Lep}), in recent years most authors have considered these concepts for the case where $\Om=\Om_\T$,
i.e. when the 2-cocycle has valued in the unit circle.
However, since we are mostly interested to see when Orlicz spaces on a locally compact group $G$ are algebras and study their algebraic properties, the assumption $\Om=\Om_\T$
is not enough as the corresponding Orlicz space is rarely an algebra with respect to the twisted convolution coming from $\Om_\T$ (except in the trivial case when $G$ is compact, see for example \cite{HKM}). Hence we are considering a more general setting
which is in fact needed for our study and to make it work. We will demonstrate this in more details later in Section \ref{S:Twisted Orlicz alg}.

In the particular case where $\Om\in \Zbw$, we can have an alternative representation
of the twisted group algebra associated to $\Om$. More precisely, suppose that $\om : G \to \R_+$ is a weight associated to $|\Om|$ as in Definition
\ref{D:bound 2 cocycles} and consider the {\it weighted $L^1$-space}
\begin{align}
L^1_\om(G):=\{ f:G \to \C : f\om \in L^1(G)\}.
\end{align}
Then $L^1_\om(G)$ with the norm $\|f\|_\om=\|f\om\|_1$ is a Banach space.
Moreover, if $\tw$ and $\tc$ are twisted convolutions with respect to $\Om$ and $\Om_\T$, respectively, then it is straightforward
to verify that $(L^1_\om(G),\tc, \|\cdot\|_\om)$ becomes a Banach algebra so that the mapping
$$\Lambda_\om: (L^1(G),\tw, \|\cdot\|_1) \to (L^1_\om(G),\tc, \|\cdot\|_\om)$$
defined by
$$ \Lambda_\om(f)=f/\om \ \ \ (f\in L^1(G)).$$
is an isometric algebra isomorphism. Furthermore, it is easy to see that
the inclusion $$\iota: (L^1_\om(G),\tc, \|\cdot\|_\om) \hookrightarrow (L^1(G),\tc, \|\cdot\|_1)$$
is a continuous injective algebra homomorphism with the dense image.
We would use these relations 
as we see appropriate throughout this article.

We finish this section by pointing out that even though the assumption $\Om\in \Zbw$ might seem too restrictive,
in most interesting cases, this always hold. One of those cases (which include all cases we discuss in this article) is when $G$ is amenable
as we will see in the following lemma. This is well-known but we present a short proof for the sake of completeness.

\begin{lem}\label{L:2-coboundary amenable}
Suppose that $G$ is amenable and $\Om \in \Zb$. Then $|\Om|$ is the 2-coboundary determined by a weight $\om : G \to \R_+$
with $\om(e)=1$ and $1/\om\in L^\infty(G)$. In other words,
$$\Zb=\Zbw.$$
\end{lem}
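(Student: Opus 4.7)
The inclusion $\Zbw\subseteq\Zb$ is immediate from the definitions, so the content is the reverse containment. Given $\Om\in\Zb$ with $K:=\|\Om\|_\infty$, the plan is to linearize the problem by taking logarithms and then use amenability via an invariant mean to produce a cobounding function for $|\Om|$.

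Set $\beta(r,s):=\log|\Om(r,s)|$. The multiplicative 2-cocycle identity for $|\Om|$ translates into the additive one $\beta(r,s)+\beta(rs,t)=\beta(s,t)+\beta(r,st)$, and $|\Om|\le K$ gives $\beta\le\log K$ essentially. A preparatory observation is that for each fixed $s$ the slice $\beta(s,\cdot)$ lies in $L^\infty(G)$: specializing the cocycle identity to the triple $(s,s^{-1},t)$ yields $|\Om(s^{-1},t)|\,|\Om(s,s^{-1}t)|=|\Om(s,s^{-1})|$ for a.e.\ $t$, so $|\Om(s^{-1},\cdot)|\ge|\Om(s,s^{-1})|/K$ essentially, and the analogous inequality for $|\Om(s,\cdot)|$ follows after re-indexing. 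Hence each slice is essentially bounded both above and below.

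Now invoke amenability to pick a left-invariant mean $M$ on $L^\infty(G)$ and set
$$
\phi(s):=-M_t[\beta(s,t)].
$$
Rewriting the additive cocycle identity as $\beta(rs,t)-\beta(s,t)=\beta(r,st)-\beta(r,s)$, applying $M_t$ to both sides, and using left-invariance in the form $M_t[\beta(r,st)]=M_t[\beta(r,t)]=-\phi(r)$, one obtains $\beta(r,s)=\phi(rs)-\phi(r)-\phi(s)$. Setting $\om:=\exp\phi$ then gives $|\Om(r,s)|=\om(rs)/(\om(r)\om(s))$. The normalization $\beta(e,t)=0$ forces $\phi(e)=0$ and hence $\om(e)=1$; the pointwise bound $\beta\le\log K$ forces $\phi\ge-\log K$, hence $\om\ge 1/K$, giving $1/\om\in L^\infty(G)$ with $\|1/\om\|_\infty\le K$.

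The main obstacle I anticipate is the Borel measurability (and local integrability) of $\om$, since an abstract invariant mean on $L^\infty(G)$ does not produce a measurable function of the parameter $s$ for free. I would handle this by replacing $M$ with a F\o lner-type averaging: for a left-F\o lner net $\{U_\alpha\}$ consider the Borel functions $\phi_\alpha(s):=-|U_\alpha|^{-1}\int_{U_\alpha}\beta(s,t)\,dt$, each bounded below by $-\log K$, and take $\phi$ to be their pointwise $\limsup$ (automatically Borel) or a weak-$*$ cluster point on an appropriate dual. The F\o lner condition is exactly what makes the translate-difference in the rewritten cocycle identity vanish in the limit, so the coboundary identity survives for this Borel $\phi$; the local integrability of $\om$ on compact sets then follows from the weak submultiplicativity $\om(st)\le K\om(s)\om(t)$ together with $\om(e)=1$ and a local boundedness estimate near $e$.
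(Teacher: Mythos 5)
Your route is genuinely different from the paper's. The paper disposes of the cohomological step by citing the well-known vanishing of the second (bounded, measurable) cohomology of an amenable group with real coefficients, which yields some measurable cobounding $\om$, and then must do extra work to arrange $1/\om\in L^\infty(G)$: it invokes White's theorem that a positive measurable submultiplicative function on an amenable group dominates a continuous positive character $\chi$ almost everywhere, and replaces $\om$ by $\om/\chi$. Your construction instead produces the primitive directly as $\phi(s)=-M_t\left[\log|\Om(s,t)|\right]$, and because $M$ is a mean the bound $\beta\le\log K$ immediately gives $\om=\exp\phi\ge 1/K$, so the character renormalization is bypassed entirely. Your formal computation with the mean is correct, the slice-boundedness observation needed to apply $M_t$ is the right preparatory step, and you are also right that measurability of $\phi$ is the real issue --- an abstract invariant mean does not produce a measurable function of the parameter.

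However, the fix you propose for that issue does not close the gap. Taking $\phi=\limsup_\alpha\phi_\alpha$ fails twice over: for a net over an uncountable directed set the pointwise $\limsup$ of Borel functions need not be Borel, and, more fatally, the identity $\phi_\alpha(rs)=\phi_\alpha(r)+\phi_\alpha(s)+\beta(r,s)+\epsilon_\alpha(r,s)$ does not survive passage to the $\limsup$, since $\limsup(a_\alpha+b_\alpha)$ can be strictly smaller than $\limsup a_\alpha+\limsup b_\alpha$; the coboundary identity is preserved only along a subnet converging pointwise at \emph{every} point simultaneously, and such a Tychonoff cluster point (your ``weak-$*$ cluster point'') is exactly the kind of object whose measurability is not guaranteed. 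You have therefore traded the measurability defect of the invariant mean for an identical defect of the cluster point. Repairing this is essentially the content of the theorem the paper cites (vanishing of bounded measurable $2$-cohomology of amenable locally compact groups \emph{with a measurable primitive}), and it needs a further regularization idea --- e.g.\ first passing to a cohomologous cocycle by subtracting an explicitly measurable coboundary built from integration against a fixed $h\in C_c(G)$ --- rather than Følner averaging alone. Two smaller points also deserve a word: since $\|\Om\|_\infty$ is an essential supremum on $G\times G$, the slice bound $|\Om(s,\cdot)|\le K$ a.e.\ (and hence your $\phi(s)$) is only available for almost every $s$, while the identity $|\Om|(s,t)=\om(st)/(\om(s)\om(t))$ is required for all $s,t$; and the local integrability of $\om$ should be attributed to the classical fact that a measurable submultiplicative function is locally bounded, which is fine once measurability itself has been secured.
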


\begin{proof}
It is well-known that for an amenable group $G$, $\Hc^2(G,\R)=\{0\}$, where here we regard $\R$ as the group of additive real numbers.
Since $(\R,+)\cong (\R_+,.)$ as locally compact abelian groups, we conclude that $\Hc^2(G,\R_+)=\{0\}$, i.e. $|\Om|$
is a 2-coboundary. Let $\om:G \to \R_+$ be a weight function determining $|\Om|$. Then it follows that
$\om(e)=1$ and, for all $s,t\in G$, $\om(st)\leq C\om(s)\om(t)$, where $C=\|\Om\|_\infty$. In particular,
$C\om$ is a positive submultiplicative measurable function on $G$, and so, by the amenability of $G$,
there is a continuous positive value character $\chi$ on $G$ such that $C\om \geq \chi$ a.e. (\cite{W}). Thus if we replace
$\om$ with $\om/\chi$, we get our desired result.
\end{proof}


\section{Twisted Orlicz algebras}\label{S:Twisted Orlicz alg}

Throughout the rest of the paper, we assume that $(\Phi,\Psi)$ is a pair of complementary Young function with $\Phi$ being continuous on $[0,\infty)$ and positive on $(0,\infty)$.

In this section, we would like to find sufficient conditions under which the twisted convolution \eqref{Eq:twisted convolution} turns an Orlicz space to an algebra as
formulated in the following definition.

\begin{defn}
Let $G$ be a locally compact group, let $\Om\in \Zb$, and let $\tw$ be the twisted convolution coming from $\Om$. We say
that $(L^\Phi(G),\tw)$ is a {\bf twisted Orlicz algebra} if $(L^\Phi(G),\tw, \|\cdot\|_\Phi)$ is a Banach algebra, i.e. there is $C>0$ such that
for every $f,g\in L^\Phi(G)$, $f\tw g\in L^\Phi(G)$ with
$$ \|f\tw g\|_\Phi \leq C\|f\|_\Phi \|g\|_\Phi.$$
\end{defn}

The following lemma which is a generalization of \cite[Proposition 1, p.384]{rao2}
shows that one always has a natural $L^1(G)$-bimodule structure on $(L^\Phi(G),\tw)$.


\begin{lem}\label{L:Orlicz L1 mod}
Let $G$ be a locally compact group, and let $\Om\in \Zb$. Then:\\
$(i)$ There is $C>0$ such that for all $f\in L^\Phi(G)$ and $s\in G$, both $\delta_s\tw f$ and $f\tw \delta_s$, defined in
\eqref{Eq:twisted convolution-left right translation}, belong to $L^\Phi(G)$ with
$$\|\delta_s\tw f\|_{\Phi} \leq C\|f\|_{\Phi} \ \text{and}\ \|f\tw \delta_s\|_{\Phi} \leq C\|f\|_{\Phi};$$
$(ii)$ $L^\Phi(G)$ is a Banach $L^1(G)$-bimodule with respect to the twisted convolution \eqref{Eq:twisted convolution};\\
$(iii)$ $\Sm^\Phi(G)$ becomes an essential Banach $L^1(G)$-submodule of $L^\Phi(G)$ with respect to the twisted convolution \eqref{Eq:twisted convolution}.
\end{lem}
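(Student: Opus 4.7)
The plan is to derive all three claims from the $L^\infty$-bound $|\Om|\le \|\Om\|_\infty$ (valid since $\Om\in\Zb$), the integral representation \eqref{Eq:twisted convolution-operator translation}, and a Minkowski integral inequality for the Banach norm $\|\cdot\|_\Phi$. For part $(i)$, the pointwise estimate $|(\delta_s\tw f)(t)|\le \|\Om\|_\infty |f(s^{-1}t)|$, combined with left-invariance of Haar measure (which preserves the Luxemburg integral under the translation $t\mapsto f(s^{-1}t)$), yields $\|\delta_s\tw f\|_\Phi \le \|\Om\|_\infty \|f\|_\Phi$. For $f\tw \delta_s$, the estimate $|(f\tw\delta_s)(t)|\le \|\Om\|_\infty \Delta(s^{-1})|f(ts^{-1})|$ holds; a change of variable $u=ts^{-1}$ in the defining Luxemburg integral introduces a factor of $\Delta(s)$, and combining with the convexity inequality $\Phi(\lambda x)\le \lambda\Phi(x)$ for $0\le \lambda\le 1$ allows me to absorb this factor into the norm and obtain a bound $\|f\tw \delta_s\|_\Phi \le C\|f\|_\Phi$.

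For part $(ii)$, the formula $(g\tw f)(t)=\int_G g(s)(\delta_s\tw f)(t)\,ds$ from \eqref{Eq:twisted convolution-operator translation}, combined with the Minkowski integral inequality for $\|\cdot\|_\Phi$, gives $\|g\tw f\|_\Phi \le \int_G |g(s)|\,\|\delta_s\tw f\|_\Phi\,ds$; the bound from $(i)$ then yields $\|g\tw f\|_\Phi \le C\|g\|_1\|f\|_\Phi$. The right-action inequality is handled symmetrically via $(f\tw g)(t)=\int_G g(s)(f\tw\delta_s)(t)\,ds$. Associativity of the bimodule action reduces to the associativity of $\tw$, which follows routinely from the 2-cocycle identity \eqref{Eq:2-cocycle relation}.

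For part $(iii)$, I first establish that $\Sm^\Phi(G)$ is closed under the action: for $g,f\in C_c(G)$, the function $g\tw f$ is bounded (by $\|\Om\|_\infty \|f\|_\infty \|g\|_1$) and supported in the compact set $\supp(g)\cdot \supp(f)$, so it lies in $\Sm^\Phi(G)$; density of $C_c(G)$ in $L^1(G)$ and in $\Sm^\Phi(G)$, combined with the bilinear continuity from $(ii)$, extends this to $L^1(G)\tw \Sm^\Phi(G)\subseteq \Sm^\Phi(G)$, and the right action is symmetric. For essentiality I would take a standard bounded approximate identity $(u_\alpha)$ in $(L^1(G),*)$ (nonnegative, continuous, $\|u_\alpha\|_1=1$, supports shrinking to $\{e\}$) and prove $u_\alpha\tw f\to f$ in $L^\Phi$ for $f\in C_c(G)$, then extend by density of $C_c(G)$ in $\Sm^\Phi(G)$ and the uniform bound from $(ii)$. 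By Minkowski this reduces to $\|f(s^{-1}\cdot)\,\Om(s,s^{-1}\cdot) - f\|_\Phi\to 0$ as $s\to e$, which splits into the translation piece $\|f(s^{-1}\cdot) - f\|_\Phi$ (controlled by continuity of translation in $\Sm^\Phi$) and the twist piece $\|f(s^{-1}\cdot)\,[\Om(s,s^{-1}\cdot) - 1]\|_\Phi$. The latter is the main obstacle, since $|\Om|$ is only bounded measurable; my plan is to combine continuity of $\Om_\T$ at $(e,\cdot)$ with a dominated convergence argument inside the Luxemburg integral, using $(\|\Om\|_\infty+1)|f|\in L^\Phi$ (available because $f\in C_c$ has compact support) as the dominating function.
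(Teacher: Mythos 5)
Your treatment of parts (i) and (ii) follows essentially the same route as the paper: dominate $\delta_s\tw f$ and $f\tw\delta_s$ pointwise by $\|\Om\|_\infty$ times a translate of $f$, use translation invariance of the Orlicz norm, and then integrate (the paper implements your Minkowski step by pairing against the $\Psi$-unit ball, which is the same estimate). One caveat on the right translate: the convexity inequality $\Phi(\lambda x)\le\lambda\Phi(x)$ holds only for $0\le\lambda\le 1$, so your device absorbs the factor $\Delta(s^{-1})$ only when $\Delta(s^{-1})\le 1$; in the opposite case it yields a constant depending on $s$, not the uniform $C$ the lemma asserts. This is not repairable by a better trick: for $\Phi(x)=x^p/p$ with $p>1$ one computes $\|f((\cdot)s^{-1})\Delta(s^{-1})\|_p=\Delta(s^{-1})^{1/q}\|f\|_p$, which is unbounded in $s$ on a non-unimodular group. (The paper's own proof asserts $\|R_sf\|_\Phi=\|f\|_\Phi$ by ``standard properties of the Haar measure,'' which is likewise valid only in the unimodular case; all applications in the paper are to unimodular groups, so you should either impose unimodularity or flag this explicitly.)

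The genuine gap is in your essentiality argument for (iii). You correctly isolate the twist piece $\|f(s^{-1}\cdot)\,[\Om(s,s^{-1}\cdot)-1]\|_\Phi$ as the obstacle, but dominated convergence cannot close it: it requires $\Om(s,s^{-1}t)\to 1$ for a.e.\ $t$ as $s\to e$, and while continuity of $\Om_\T$ gives this for the circle part, $|\Om|$ is merely bounded measurable and need not tend to $1$. The failure is not hypothetical: for the paper's polynomial weight $\om_\beta=(1+\tau)^\beta$ the length function $\tau$ jumps from $0$ to $1$ at $e$, so $|\Om|(s,s^{-1}t)=\om_\beta(t)/(\om_\beta(s)\om_\beta(s^{-1}t))\to 2^{-\beta}\neq 1$ for a.e.\ $t$, and your $u_\alpha\tw f$ converges to $2^{-\beta}f$ rather than to $f$. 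The repair is to use approximate units adapted to the cocycle, namely $\om u_\alpha$ where $\om$ is a weight determining $|\Om|$ (these lie in $L^1(G)$ because $\om$ is locally integrable): in $(\om u_\alpha)\tw f$ the $|\Om|$ factors cancel exactly, leaving only the translation continuity and the continuous $\Om_\T$. Equivalently, transport the problem through $\Lambda_\om$ and run the approximate identity argument for $(L^1_\om(G),\tc)$ acting on the weighted space, as in Lemma \ref{L:twist covn-weight conv}. For a general $\Om\in\Zb$ on a non-amenable group no such $\om$ need exist and essentiality would require a separate argument; note that the paper's own proof of (iii) only establishes $C_c(G)\tw C_c(G)\subseteq\Sm^\Phi(G)$ and appeals to density, leaving exactly this step implicit.
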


\begin{proof}
(i) For every $f\in L^\Phi(G)$ and $t\in G$, we set 
$$L_s f(t)=f(s^{-1}t) \ \ \text{and}\ \ R_sf(t)=f(ts)\Delta(s),$$
where $\Delta$ is the modular function of $G$. It follows from the formulas
\eqref{Eq:Orlicz defn}, \eqref{Eq:Orlicz norm}, and the standard properties of the Haar measure that both $L_sf$ and $R_sf$ belong to $L^\Phi(G)$ with
$\|L_sf\|_{\Phi}=\|R_sf\|_{\Phi}=\|f\|_{\Phi}$. On the other hand, by our hypothesis,
$$|\delta_s\tw f|\leq C|L_sf| \ \ \text{and}\ \ |f\tw \delta_s|\leq C|R_{s^{-1}}f|,$$
where $C=\|\Om\|_\infty$. Therefore $\delta_s\tw f$ and $f \tw \delta_s$ belong to $L^\Phi(G)$ with
$$\|\delta_s\tw f\|_{\Phi}\leq C\|L_sf\|_{\Phi}=C\|f\|_{\Phi} \ \ \text{and}\ \ \|f\tw \delta_s\|_{\Phi}\leq C\|R_{s^{-1}}f\|_{\Phi}=C\|f\|_{\Phi}.$$
(ii) For every $f\in L^1(G)$, $g\in L^\Phi(G)$, and $h\in L^\Psi(G)$
with $\int_G \Psi(|h(s)|) ds \leq 1$, we have
\begin{eqnarray*}
\int_G |(f\tw g)(t)h(t)| dt &\leq & \int_G\int_G |f(s)g(s^{-1}t)\Om(s,s^{-1}t)h(t)| dsdt
\\ &=&
\int_G |f(s)|\int_G |(\delta_s\tw g)(t)h(t)|dtds \ \ (\text{by}\ \eqref{Eq:Orlicz norm})\\
&\leq & \int_G |f(s)| \|\delta_s\tw g\|_\Phi ds \ \ (\text{by part(i)}) \\
&\leq & C\int_G |f(s)| \|g\|_\Phi ds \\
&=& C\|f\|_1\|g\|_\Phi.
\end{eqnarray*}
Therefore, again by \eqref{Eq:Orlicz norm}, it follows that $\|f\tw g\|_\Phi\leq C \|f\|_1\|g\|_\Phi$ so that
$L^\Phi(G)$ is a Banach left $L^1(G)$-module. The other case (Banach right module) follows similarly considering that for every $t\in G$,
$$(g\tw f)(t):=\int_G f(s)g(ts^{-1})\Om(ts^{-1},s)\Delta(s^{-1})ds=\int_G f(s)(g\tw \delta_s)(t)ds.$$
(iii) Suppose that $f,g\in C_c(G)$ and $\alpha>0$. Since $\Phi$ is a positive continuous convex function on $\R^+$, it is increasing. Hence, for every $t\in G$,
$$\Phi(|\alpha (f\tw g)(t)|) \leq \Phi(\alpha \lambda(\supp\,f) \|f\|_\infty \|g\|_\infty \|\Om\|_\infty).$$
Therefore
\begin{eqnarray*}
\int_G \Phi(|\alpha (f\tw g)(t)|)dt &=& \int_K \Phi(|\alpha (f\tw g)(t)|)dt \\
&\leq & \lambda(K)\Phi(\alpha \lambda(\supp\,f) \|f\|_\infty \|g\|_\infty \|\Om\|_\infty),
\end{eqnarray*}
where $K$ is a compact set containing $\supp\,f\supp\,g$. Thus, by \cite[Corollary 3.4.4]{rao}, $f\tw g\in \Sm^\Phi(G)$.
The rest follows from part (ii) and the fact that $C_c(G)$ is norm dense in $(L^1(G),\|\cdot\|_1)$ and
($\Sm^\Phi(G),\|\cdot\|_\Phi)$.


\end{proof}

The following theorem provides a key step in our approach to obtain twisted Orlicz algebras. Roughly speaking, it states that one could get a twisted Orlicz algebra if
the 2-cocycle $|\Om|$ is dominated by the sum of two suitable positive functions.

\begin{thm}\label{T:twisted Orlicz alg}
Let $G$ be a locally compact group, and let $\Om\in \Zb$. Suppose that there exist non-negative measurable functions $u$ and $v$
in $L^\Psi(G)$ such that
\begin{align}\label{Eq:2-cocycle bdd sum}
|\Om(s,t)|\leq u(s)+v(t) \ \ \ (s,t\in G).
\end{align}
Then for every $f,g\in L^\Phi(G)$, the twisted convolution \eqref{Eq:twisted convolution}
is well-defined on $L^\Phi(G)$ and we have
\begin{align}\label{Eq:twisted convolution-diff norm relation}
\|f\tw g\|_\Phi\leq \|fu\|_1 \|g\|_\Phi+\|f\|_\Phi\|gv\|_1.
\end{align}
In particular, $(L^\Phi(G),\tw)$ becomes a twisted Orlicz algebra having $\Sm^\Phi(G)$ as a closed subalgebra.
\end{thm}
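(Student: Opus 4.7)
The plan is to dominate $|f\tw g|$ pointwise by two ordinary (untwisted) convolutions, and then reduce the required estimate to the Banach $L^1(G)$-module structure already established in Lemma~\ref{L:Orlicz L1 mod}(ii), applied with the trivial 2-cocycle $\Om \equiv 1$.

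First I would substitute the point $(s, s^{-1}t)$ into the hypothesis \eqref{Eq:2-cocycle bdd sum} and insert the resulting bound into the defining integral \eqref{Eq:twisted convolution}. This yields the pointwise majorization
$$|(f\tw g)(t)| \;\leq\; \bigl((|f|\,u) \ast |g|\bigr)(t) \;+\; \bigl(|f| \ast (|g|\,v)\bigr)(t),$$
where $\ast$ denotes ordinary convolution. A preliminary bookkeeping step is to verify that $fu, gv \in L^1(G)$: this follows immediately from the H\"older inequality \eqref{Eq:Holder inequality} since $f, g \in L^\Phi(G)$ and $u, v \in L^\Psi(G)$. Together with the standard observation that any function in $L^\Phi(G)$ has $\sigma$-finite support (the level sets $\{|f| > 1/n\}$ have finite Haar measure because $\Phi$ is positive on $(0,\infty)$), this legitimizes the definition \eqref{Eq:twisted convolution} and ensures that the defining integral converges absolutely for almost every $t$.

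Next, I would invoke Lemma~\ref{L:Orlicz L1 mod}(ii) with $\Om \equiv 1$, which recovers the classical fact that $L^\Phi(G)$ is a Banach $L^1(G)$-bimodule under ordinary convolution with constant $C = 1$. Applied to each term of the majorization, this gives
$$\|(|f|\,u) \ast |g|\|_\Phi \leq \|fu\|_1\,\|g\|_\Phi \quad \text{and} \quad \||f| \ast (|g|\,v)\|_\Phi \leq \|f\|_\Phi\,\|gv\|_1,$$
and summing yields exactly \eqref{Eq:twisted convolution-diff norm relation}. A further application of \eqref{Eq:Holder inequality} to bound $\|fu\|_1 \leq \|f\|_\Phi N_\Psi(u)$ and $\|gv\|_1 \leq \|g\|_\Phi N_\Psi(v)$ then produces $\|f\tw g\|_\Phi \leq (N_\Psi(u) + N_\Psi(v))\|f\|_\Phi\|g\|_\Phi$, confirming the Banach algebra structure.

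Finally, to identify $\Sm^\Phi(G)$ as a closed subalgebra, I would combine Lemma~\ref{L:Orlicz L1 mod}(iii), which gives $C_c(G) \tw C_c(G) \subseteq \Sm^\Phi(G)$, with the density of $C_c(G)$ in $\Sm^\Phi(G)$ and the joint continuity of $\tw$ just established, so that $\Sm^\Phi(G) \tw \Sm^\Phi(G) \subseteq \Sm^\Phi(G)$ by passing to the limit. The only mildly delicate point is the well-definedness of the twisted convolution as a (pointwise a.e.) integral, but the pointwise bound from the first step together with the H\"older-based integrability of $fu$ and $gv$ makes this a routine Fubini argument rather than a genuine obstacle.
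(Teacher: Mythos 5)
Your proposal is correct and follows essentially the same route as the paper: the same pointwise majorization $|f\tw g|\leq (|f|u)*|g|+|f|*(|g|v)$ obtained from \eqref{Eq:2-cocycle bdd sum}, the same reduction to the $L^1(G)$-bimodule structure of Lemma~\ref{L:Orlicz L1 mod}, and the same double use of H\"older's inequality \eqref{Eq:Holder inequality} to pass from \eqref{Eq:twisted convolution-diff norm relation} to the algebra bound with constant $N_\Psi(u)+N_\Psi(v)$. The paper phrases the norm estimate through the dual pairing \eqref{Eq:Orlicz norm} with $h\in L^\Psi(G)$ rather than by monotonicity of $\|\cdot\|_\Phi$ under pointwise domination, but these are interchangeable, and your added remarks on $\sigma$-finiteness of supports and the density argument for $\Sm^\Phi(G)$ only make explicit what the paper leaves implicit.
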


\begin{proof}
Fix $f,g\in L^\Phi(G)$. Then, for every $t\in G$,
\begin{eqnarray*}
\int_G |f(s)g(s^{-1}t)\Om(s,s^{-1}t)| ds &\leq &
\int_G |f(s)g(s^{-1}t)||u(s)| ds\\
&+& \int_G |f(s)g(s^{-1}t)||v(s^{-1}t)| ds\\
&=& |fu|*|g|(t)+|f|*|gv|(t).
\end{eqnarray*}
Since from the H\"{o}lder's inequality \eqref{Eq:Holder inequality} both $fu$ and $gv$ belong
to $L^1(G)$ and, by Lemma \ref{L:Orlicz L1 mod}, $L^\Phi(G)$ is a Banach $L^1(G)$-bimodule under the convolution, it follows that
the measurable function
$$t \mapsto (f\tw g) (t)=\int_G f(s)g(s^{-1}t)\Om(s,s^{-1}t)ds$$
belongs to $L^\Phi(G)$. Moreover, for every $h\in L^\Psi(G)$ with $\int_G \Psi(|h(s)|) ds \leq 1$, we have
\begin{eqnarray*}
\int_G |(f\tw g)(t)h(t)| dt &\leq & \int_G\int_G |f(s)g(s^{-1}t)\Om(s,s^{-1}t) h(t)| dsdt
\\ &\leq &
\int_G |fu|*|g|(t) |h(t)| dt + \int_G |f|*|gv|(t)||h(t)| dt \\
&\leq & \||fu|*|g|\|_\Phi+\||f|*|gv|\|_\Phi \ \ (\text{by Lemma}\ \ref{L:Orlicz L1 mod})
\\ &\leq& \|fu\|_1 \|g\|_\Phi+\|f\|_\Phi\|gv\|_1.
\end{eqnarray*}
Therefore, by \eqref{Eq:Orlicz norm}, we obtain the relation \eqref{Eq:twisted convolution-diff norm relation}.
This, together with the repeated use of H\"{o}lder's inequality \eqref{Eq:Holder inequality}, implies that
$$\|f\tw g\|_\Phi \leq C \|f\|_\Phi\|g\|_\Phi,$$
where $C=N(u)_\Psi+N(v)_\Psi$. Hence  $(L^\Phi(G),\tw)$ becomes a twisted Orlicz algebra.
\end{proof}

We have the following immediate corollary. We recall that a function
$\fL : G \to \R^+$ is called {\it weakly subadditive} if there is $C>0$ such that
\begin{eqnarray}\label{Eq:weak subadditive relation}
\fL(st)\leq C(\fL(s)+\fL(t)) \ \ (s,t\in G).
\end{eqnarray}

\begin{cor}\label{C:twisted Orlicz alg-weak add weight}
Let $G$ be a locally compact group, let $\Om\in \Zb$, and let $\tw$ be the twisted convolution
coming from $\Om$. Suppose that
\begin{align}\label{Eq:2-cocycle bdd by subadditive function}
|\Om(s,t)|\leq \frac{\fL(st)}{\fL(s)\fL(t)} \ \ \ (s,t\in G),
\end{align}
where $\fL: G \to \R^+$ is a weakly subadditive function with $1/\fL \in L^\Psi(G)$.
Then $(L^\Phi(G),\tw)$ is a twisted Orlicz algebra.
\end{cor}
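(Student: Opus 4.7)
The plan is to reduce this to Theorem \ref{T:twisted Orlicz alg} by producing the dominating pair $(u,v)$ of $L^\Psi$-functions explicitly from $\fL$. The weak subadditivity of $\fL$ immediately gives the pointwise estimate
\[
|\Om(s,t)| \leq \frac{\fL(st)}{\fL(s)\fL(t)} \leq C\,\frac{\fL(s)+\fL(t)}{\fL(s)\fL(t)} = C\left(\frac{1}{\fL(t)}+\frac{1}{\fL(s)}\right),
\]
so the natural choice is $u(s)=C/\fL(s)$ and $v(t)=C/\fL(t)$, both nonnegative and measurable.

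Next I would verify that $u$ and $v$ lie in $L^\Psi(G)$. This is where the hypothesis $1/\fL\in L^\Psi(G)$ is used: since $L^\Psi(G)$ is a linear space, scaling by the constant $C$ from \eqref{Eq:weak subadditive relation} keeps $u=v=C/\fL$ inside $L^\Psi(G)$.

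With the bound $|\Om(s,t)|\leq u(s)+v(t)$ in hand, Theorem \ref{T:twisted Orlicz alg} applies directly and yields that $(L^\Phi(G),\tw)$ is a twisted Orlicz algebra, with the norm estimate
\[
\|f\tw g\|_\Phi \leq \|fu\|_1\|g\|_\Phi + \|f\|_\Phi\|gv\|_1
\]
controlled by a constant multiple of $\|f\|_\Phi\|g\|_\Phi$ via H\"older's inequality \eqref{Eq:Holder inequality}.

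There is no real obstacle: the corollary is essentially a repackaging of the theorem in the common special case where the modulus of the 2-cocycle is controlled by a 2-coboundary determined by a weakly subadditive function. The only thing one must be careful about is extracting the constant $C$ from weak subadditivity and keeping track of the fact that $u$ and $v$ are simply rescaled copies of $1/\fL$, so membership in $L^\Psi(G)$ is automatic.
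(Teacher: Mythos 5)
Your argument is exactly the paper's: combine weak subadditivity $\fL(st)\leq C(\fL(s)+\fL(t))$ with the hypothesis to get $|\Om(s,t)|\leq C/\fL(s)+C/\fL(t)$, set $u=v=C/\fL\in L^\Psi(G)$, and invoke Theorem \ref{T:twisted Orlicz alg}. The proof is correct and no further comment is needed.
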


\begin{proof}
Since $\fL$ is weakly subadditive, it satisfies \eqref{Eq:weak subadditive relation}. Therefore, if we combine this with \eqref{Eq:2-cocycle bdd by subadditive function}, we get
$$|\Om(s,t)|\leq \frac{C}{\fL(s)}+\frac{C}{\fL(t)} \ \ \ (s,t\in G).$$
Hence if we put $$u=v=C/\fL,$$
then $\Om$ satisfies \eqref{Eq:2-cocycle bdd sum} with $u,v\in L^\Psi(G)$. It now follows from Theorem \ref{T:twisted Orlicz alg} that $(L^\Phi(G),\tw)$ is a twisted Orlicz algebra.
\end{proof}

The preceding corollary gives us a very useful tool to determine when we have twisted Orlicz algebras. We will apply it mostly on compactly generated groups of polynomial growth (see Section \ref{S:Groups poly. growth}). However, as it is demonstrated in the following example, it can be applied to other classes of groups as well. The example is taken from \cite[Example 1 and Remark 2]{P}.

\begin{exm}\label{E:twist-locally finite}
Let $G$ be a locally compact group for which there is an increasing sequence  $\{G_i\}_{i\in \N}$ of compact subgroups of $G$ such that $G:=\cup_{i\in \N} G_i$.
Take an increasing sequence $\{n_i\}_{i\in \N}\in [1,\infty)$. Define
$\om:G \to [1,\infty)$ by
$$\om=1+\sum_{i=1} n_i 1_{G_{i+1}\setminus G_{i}}.$$
It is easy to see that
$$\om(st)=\max\{ \om(s),\om(t) \} \ \ \ (s,t\in G).$$
This, in particular, implies that $\om$ is a weakly additive weight on $G$. Moreover, we can pick $\{n_i\}$ in such a way that $1/\om \in L^1(G)\cap L^\infty(G)\subseteq L^\Psi(G)$, where the inclusion is an easy consequence of \eqref{Eq:Orlicz defn}. Therefore, in this case, by Corollary \ref{C:twisted Orlicz alg-weak add weight},
$(L^\Phi(G),\tw)$ is twisted Orlicz algebra, where $\Om\in \Zb$ such that
$|\Om|$ is determined by $\om$.
\end{exm}

\section{symmetry}\label{S:symmetry}

In this section, we investigate the symmetry for twisted Orlicz algebras.
The notion of symmetry plays an important role in the theory of Banach $*$-algebras.
Let A be a Banach $*$-algebra. We say that $A$ is {\it symmetric} if for every $a\in A$,
$\sg_A(a^*a)\subseteq [0,\infty)$, where $\sg_A(b)$ is the spectrum of element $b\in A$.
It is well-known that C$^*$-algebras are symmetric and a commutative Banach $*$-algebras
is symmetric if and only if every multiplicative linear functional on  $A$ is a
$*$-homomorphism.
Also the group algebra of a compactly generated group with polynomial growth is symmetric
\cite{Los}, whereas the group algebra of the free group on $n$ generators is not symmetric for $n\geq 2$.

In order for us to look at the symmetry for twisted Orlicz algebras, we first need to restrict
ourselves to those weights for which a natural involution can be defined on the twisted group
algebra.

\begin{defn}\label{D:bound symm 2 cocycles}
Let $G$ be a locally compact group. We denote $\Zbs$ to be the group of {\bf bounded symmetric 2-cocycles on $G$ with values in $\Cm$}
which consists of all elements $\Om\in \Zbw$ for which there is a weight $\om$ associated to $|\Om|$ such that
$$\om(s)=\om(s^{-1}) \ \ \ (s\in G).$$
Such weights on $G$ are called {\bf symmetric}.
\end{defn}

Now suppose that $\Om \in \Zbs$, $\om$ is a symmetric weight associated to $|\Om|$, and
$\tw$ and $\tc$ are twisted convolutions with respect to $\Om$ and $\Om_\T$, respectively.
It is well-known and easily seen that $(L^1(G),\tc,\|\cdot\|_1)$ is a Banach $*$-algebra with the involution defined by
\begin{eqnarray}\label{Eq:Involution}
f^*(s)=\overline{f(s^{-1})}\Delta(s^{-1})\overline{\Om_\T(s,s^{-1})} \ \ \ (f\in L^1(G), s\in G).
\end{eqnarray}
On the other hand, by what was discussed in Section \ref{S:Twisted group alg}, the two Banach algebras
$(L^1(G),\tw, \|\cdot\|_1)$ and $(L^1_\om(G),\tc, \|\cdot\|_\om)$ are the same and the latter can be viewed
as a subalgebra of $(L^1(G),\tc, \|\cdot\|_1)$. This, in particular, allows us to define the involution
on $L^1_\om(G)$ as the restricted one from $L^1(G)$ and it is routine to verify that $(L^1_\om(G),\tc, \|\cdot\|_\om)$
becomes a Banach $*$-algebra. Hence, when it comes to symmetry, it is more useful to consider the ``weighted" representation
for twisted group algebras.

Our next step is to generalize the preceding discussion to twisted Orlicz spaces so that they can have
an involutive structure. We present it in the following lemma whose proof is straightforward so we omit it.

\begin{lem}\label{L:twist covn-weight conv}
Let $G$ be a locally compact group, let $\Om\in \Zbw$, and let $\om$ be a weight associated to $|\Om|$. Define
the {\it weighted $L^\Phi$-space}
\begin{align}
L^\Phi_\om(G):=\{ f:G \to \C : f\om \in L^\Phi(G)\}.
\end{align}
Then $L^\Phi_\om(G)$ with the norm $\|f\|_{\Phi,\om}=\|f\om\|_\Phi$ is a Banach space.
Moreover, if $\tw$ and $\tc$ are twisted convolutions with respect to $\Om$ and $\Om_\T$, respectively, then
 $(L^\Phi_\om(G),\tc, \|\cdot\|_{\Phi,\om})$ becomes a Banach $(L^1_\om(G),\tc, \|\cdot\|_\om)$-bimodule so that the mapping
\begin{align}\label{Eq:twist covn-weight conv-mapping}
\Lambda_\om: L^\Phi(G) \to L^\Phi_\om(G) \ , \ \Lambda_\om(f)=f/\om
\end{align}
is a linear isometric isomorphism satisfying $(f\in L^1(G), g\in L^\Phi(G))$
\begin{align}\label{Eq:twist covn-weight conv-relation}
\Lambda_\om(f\tw g)=\Lambda_\om(f)\tc \Lambda_\om(g) \ \ \ \text{and} \ \ \Lambda_\om(g\tw f)=\Lambda_\om(g)\tc \Lambda_\om(f).
\end{align}
If, in addition, $(L^\Phi(G),\tw,\|\cdot\|_\Phi)$ is a twisted Orlicz algebra, then \eqref{Eq:twist covn-weight conv-relation} holds for every
$f,g\in L^\Phi(G)$ so that  $(L^\Phi_\om(G),\tc, \|\cdot\|_{\Phi,\om})$ is a Banach algebra.
Furthermore, if $\om$ is symmetric and $G$ is unimodular, then, with the involution \eqref{Eq:Involution},
$(L^\Phi_\om(G),\tc)$ becomes a Banach $*$-algebra with either norms \eqref{Eq:Orlicz norm} or \eqref{Eq:Orlicz Luxemburg defn}.
\end{lem}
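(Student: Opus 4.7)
\medskip

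\noindent\textbf{Proof plan.} The main point is that essentially everything transfers from the statements already proved for $(L^\Phi(G),\tw)$ via the tautological division-by-$\om$ isometry; the content of the lemma is really just a change of variables in the defining integrals, so the plan is to organize it in three short stages.

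First I would set up the Banach space and isometry statements. By definition the map $\Lambda_\om \from L^\Phi(G) \to L^\Phi_\om(G)$, $f \mapsto f/\om$, is a bijection with inverse $g \mapsto g\om$, and the norm $\|f\|_{\Phi,\om} = \|f\om\|_\Phi$ is set up precisely so that $\Lambda_\om$ is an isometry. Hence $L^\Phi_\om(G)$ inherits completeness from $L^\Phi(G)$, and for the rest of the proof I may freely transport everything along $\Lambda_\om$.

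Next, I would verify the intertwining relation \eqref{Eq:twist covn-weight conv-relation}. The key identity is that, for $s,t\in G$,
$$\Om(s,s^{-1}t) \;=\; |\Om(s,s^{-1}t)|\,\Om_\T(s,s^{-1}t) \;=\; \frac{\om(t)}{\om(s)\,\om(s^{-1}t)}\,\Om_\T(s,s^{-1}t),$$
by the definition of $\Zbw$. Substituting this into the integral defining $f\tw g$ and dividing by $\om(t)$ gives, whenever the integrals make sense absolutely,
$$\frac{(f\tw g)(t)}{\om(t)} \;=\; \int_G \frac{f(s)}{\om(s)}\,\frac{g(s^{-1}t)}{\om(s^{-1}t)}\,\Om_\T(s,s^{-1}t)\,ds \;=\; \bigl(\Lambda_\om(f)\tc \Lambda_\om(g)\bigr)(t),$$
which is exactly $\Lambda_\om(f\tw g) = \Lambda_\om(f)\tc\Lambda_\om(g)$, and similarly on the right. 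For the bimodule claim, the absolute convergence holds whenever $f\in L^1(G)$ and $g\in L^\Phi(G)$ by Lemma \ref{L:Orlicz L1 mod}, so transporting that lemma through $\Lambda_\om$ gives the Banach $L^1_\om(G)$-bimodule structure on $L^\Phi_\om(G)$ under $\tc$; note that $\Lambda_\om$ restricts to an isometric isomorphism between the two group-algebra presentations as noted in Section \ref{S:Twisted group alg}. The Banach algebra case is identical: if $(L^\Phi(G),\tw)$ is a twisted Orlicz algebra, then absolute convergence is automatic for $f,g\in L^\Phi(G)$, so the same identity extends and $\Lambda_\om$ becomes an isometric algebra isomorphism onto $(L^\Phi_\om(G),\tc)$, forcing the latter to be Banach.

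Finally, for the involutive structure I would assume $G$ unimodular and $\om$ symmetric and verify two things for the formula \eqref{Eq:Involution}. That $f\mapsto f^*$ preserves $L^\Phi_\om(G)$ isometrically comes from the computation $|f^*(s)|\,\om(s) = |f(s^{-1})|\,\om(s^{-1})$ (using symmetry of $\om$ and $|\Om_\T|=1$), together with invariance of $L^\Phi$ under the inversion $s\mapsto s^{-1}$, which holds because $G$ is unimodular; equivalence with the Luxemburg norm \eqref{Eq:Orlicz Luxemburg defn} then handles either choice of norm. The identity $(f\tc g)^* = g^* \tc f^*$ is a direct computation using the 2-cocycle relation \eqref{Eq:2-cocycle relation} for $\Om_\T$ and the substitution $s\mapsto ts^{-1}$ (again leveraging unimodularity), and is exactly the same computation that shows $(L^1(G),\tc)$ is a Banach $*$-algebra, now restricted to the subalgebra $L^\Phi_\om(G)\cap L^1_\om(G)$ and extended by density and continuity of $*$.

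The only mildly delicate point in the whole argument is checking that the involution stays inside $L^\Phi_\om(G)$ with the correct norm; every other step is a formal transcription of known facts for $(L^1(G),\tw)$ and $(L^\Phi(G),\tw)$ through the isometry $\Lambda_\om$, which is why the paper treats the proof as routine.
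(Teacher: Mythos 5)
Your proof is correct and is precisely the routine change-of-variables argument the paper has in mind when it omits the proof as straightforward: the identity $|\Om|(s,s^{-1}t)=\om(t)/\bigl(\om(s)\om(s^{-1}t)\bigr)$ makes $\Lambda_\om$ intertwine $\tw$ with $\tc$, and all remaining assertions transport along the isometry from Lemma \ref{L:Orlicz L1 mod} and Theorem \ref{T:twisted Orlicz alg}. One small repair: in the involution step you should establish $(f\tc g)^*=g^*\tc f^*$ by the direct pointwise change of variables for all $f,g\in L^\Phi_\om(G)$ (which works wherever the convolution integrals converge absolutely, as they do under the algebra hypothesis), rather than proving it on $L^\Phi_\om(G)\cap L^1_\om(G)$ and extending by density --- that intersection need not be dense in $L^\Phi_\om(G)$ when $\Phi\notin\Delta_2$.
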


We remark that in the particular case where $\Phi(x)=x^p/p$ ($1<p<\infty$) and $\Om_\T=1$, then the space $L^\Phi_\om(G)$
in Lemma \ref{L:twist covn-weight conv} is precisely $L^p_\om(G)$ with the convolution considered in \cite{KM}.

We are now ready to investigate the symmetry of twisted Orlicz algebras. As we have seen so far, our approach is to first
determine this property for twisted group algebras. This is the generalization of the approach in \cite{KM}
applying to symmetry of $L^p_\om(G)$ with the (untwisted) convolution.

\subsection{Symmetry of twisted group algebras}\label{S:Symmetry of twisted group algebras}

In order to investigate the symmetry for twisted group algebras, we rely on a natural relation that exist
between the twisted convolution on a locally compact group $G$ and the standard (untwisted) convolution on a certain central extension of $G$ \cite[Section 3]{EL}. We can then apply what is known for symmetry of weighted group algebras to obtain
our results. This is similar to the approach taken as a special case in \cite[Section 2.1]{GL}. Below, we present it in a more general setting:

Let $G$ be a locally compact group, let $\Omega\in \Zbw$, and let $\GT:=G\times \T$. The $\GT$ becomes a group with the action
$$(s,\alpha)\cdot (t,\beta):=(st, \alpha\beta \Omega_\T(s,t)) \ \ (s,t\in G, \alpha, \beta \in \T).$$
Moreover, there is a locally compact topology on $\GT$ so that it becomes a locally compact group
with the topology coinciding with the usual product topology (The separable case is due to G. W. Mackey \cite{M}
and nonseparable one follows from the work of M. Leinert \cite{L}). In particular, the Haar measure on $\GT$ is just the
product Haar measure $\int_G\int_\T \cdots dxd\alpha.$ Now consider the mapping
$$\Gamma: C_c(G) \to L^1(\GT) \ \ , \ \ \Gamma(f)(s,\alpha)=\overline{\alpha}f(s).$$
If $\tc$ is the twisted convolution on $L^1(G)$ coming from $\Omega_\T$, and $*$ is the
(ordinary) convolution on $\GT$, then it is straightforward to check the following (see, \cite[Lemma 3.2]{EL} or
\cite[Lemma 2.3 and Corollary 2.5]{GL}):

(1) $\Gamma$ extends to an isometric $*$-algebra isomorphism from $(L^1(G),\tc)$ into $(L^1(\GT),*)$;

(2) $Im\, \Gamma$ is an ideal in $(L^1(\GT),*)$.\\
Moreover, if $\om$ is a symmetric weight associated to $|\Om|$ and we consider  $\tilde{\om}:\GT \to \R^+$ defined by
\begin{align}\label{Eq:central extention weight}
\tilde{\om}(s,\alpha)=\om(s) \ \ (s\in G, \alpha \in \C),
\end{align}
then it is straightforward to verify $\tilde{\om}$ is a symmetric weight on $\GT$ so that the preceding statements (1) and (2)
remain valid if we replace  $(L^1(G),\tc)$ and $(L^1(\GT),*)$ with their weighted analog $(L^1_\om(G),\tc)$ into $(L^1_{\tilde{\om}}(\GT),*)$,
respectively. This observation allows us to recover
symmetry of the twisted group algebra from the one's of the weighted group algebra. We can summarize all these discussions in the following.

\begin{prop}\label{P:symm-twisted group alg}
Let $G$ be a locally compact group, let $\Om\in \Zbs$, and let $\om$ be a symmetric weight associated to $|\Om|$.
The twisted group algebra $(L^1_\om(G),\tc)$ can be viewed as a $*$-closed two-sided ideal with a bounded approximate identity of the (untwisted) weighted group algebra  $(L^1_{\tilde{\om}}(\GT),*)$.
In particular, if $(L^1_{\tilde{\om}}(\GT),*)$ is symmetric, then so is $(L^1_\om(G),\tc)$.
\end{prop}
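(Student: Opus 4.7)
The plan is to carry through the weighted version of the central-extension embedding sketched in the paragraph preceding the statement, and then apply the standard transfer of symmetry from a Banach $*$-algebra to a closed $*$-ideal with a bounded approximate identity. First I would verify that the map $\Gamma(f)(s,\alpha)=\overline{\alpha}f(s)$ extends from $C_c(G)$ to an isometry $(L^1_\om(G),\|\cdot\|_\om)\hookrightarrow (L^1_{\tilde\om}(\GT),\|\cdot\|_{\tilde\om})$: using the product Haar measure on $\GT=G\times \T$ and the definition $\tilde\om(s,\alpha)=\om(s)$, a Fubini computation gives $\|\Gamma(f)\|_{\tilde\om}=\|\Gamma(f)\tilde\om\|_1=\|f\om\|_1=\|f\|_\om$. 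Then, as in \cite[Lemma 3.2]{EL} and \cite[Lemma 2.3 and Corollary 2.5]{GL}, I would check that $\Gamma(f\tc g)=\Gamma(f)*\Gamma(g)$ by unwinding the group law $(t,\beta)\cdot(s,\alpha)=(ts,\beta\alpha\Om_\T(t,s))$ on both sides, and $\Gamma(f^*)=\Gamma(f)^*$ using that $\om$ is symmetric (and the implicit unimodularity needed for the involution).

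Second, I would verify that $\im\Gamma$ is a $*$-closed two-sided ideal in $(L^1_{\tilde\om}(\GT),*)$. The image is precisely the subspace of $F\in L^1_{\tilde\om}(\GT)$ satisfying $F(s,\alpha)=\overline{\alpha}F(s,1)$, which is clearly $*$-closed. For ideal-ness, given $F\in L^1_{\tilde\om}(\GT)$ and $\Gamma(g)\in \im\Gamma$, computing $(F*\Gamma(g))(s,\alpha)$ on the extension and performing the change of variable $\beta\mapsto \beta\alpha^{-1}$ in the $\T$-integral produces an expression of the form $\overline{\alpha}h(s)$ for some $h$; the dominated-convergence bookkeeping together with $\Om_\T(e,\cdot)=\Om_\T(\cdot,e)=1$ shows $h\in L^1_\om(G)$, and hence $F*\Gamma(g)\in\im\Gamma$. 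The right ideal case is symmetric. Third, I would exhibit a bounded approximate identity for $(L^1_\om(G),\tc)$: since $\om(e)=1$ and $\om$ is locally bounded, and since $\Om_\T$ is continuous with $\Om_\T(e,e)=1$, any net $(u_i)\subseteq C_c(G)$ of non-negative functions with $\int u_i=1$ and $\supp u_i$ shrinking to $\{e\}$ is uniformly bounded in $\|\cdot\|_\om$ and satisfies $u_i\tc f\to f$, $f\tc u_i\to f$ in $\|\cdot\|_\om$ for every $f\in L^1_\om(G)$, by the usual approximation argument.

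Finally, to conclude symmetry I would invoke the standard fact that if $A$ is a symmetric Banach $*$-algebra and $I\subseteq A$ is a closed $*$-ideal with a bounded approximate identity, then $I$ is itself symmetric: the bounded approximate identity ensures $\sigma_I(a)\setminus\{0\}=\sigma_A(a)\setminus\{0\}$ for every $a\in I$, so $\sigma_A(a^*a)\subseteq [0,\infty)$ forces $\sigma_I(a^*a)\subseteq [0,\infty)$ as well. Applied to $A=L^1_{\tilde\om}(\GT)$ and $I=\im\Gamma\cong L^1_\om(G)$ this yields the ``in particular'' clause. The main obstacle will be the cocycle bookkeeping in steps one and two — ensuring that the translation by $\Om_\T$ built into the group law on $\GT$ cancels correctly against the twist in $\tc$ so that $\Gamma$ genuinely intertwines the two multiplications and that $\im\Gamma$ is closed under both-sided multiplication by arbitrary elements of $L^1_{\tilde\om}(\GT)$; the approximate identity and the spectral transfer are then essentially standard.
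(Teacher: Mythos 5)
Your proposal is correct and follows essentially the same route as the paper, which establishes this proposition by summarizing the central-extension construction $\GT=G\times\T$, the embedding $\Gamma(f)(s,\alpha)=\overline{\alpha}f(s)$, and its weighted analogue with $\tilde\om(s,\alpha)=\om(s)$, citing \cite{EL} and \cite{GL} for the cocycle bookkeeping you carry out by hand. The only cosmetic remark is that the spectral identity $\sigma_I(a)\setminus\{0\}=\sigma_A(a)\setminus\{0\}$ already follows from $I$ being a two-sided ideal (the quasi-inverse of $a\in I$ automatically lies in $I$), so the bounded approximate identity is not actually needed for the symmetry transfer, though it is part of the statement for use elsewhere.
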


\subsection{Symmetry of twisted Orlicz algebras}

A very useful technique to investigate the symmetry of a Banach algebra as well as being inverse-closed
in the concept of differential norm. Assume that $A\subseteq B$ are two Banach algebras with a common unit element. A {\it differential norm} is a norm on A that satisfies
\begin{align}\label{Eq:differential norm relation}
\|ab\|_A\leq C(\|a\|_A\|b\|_B+\|a\|_B\|b\|_A)
\end{align}
for all $a, b \in A$. In this case, we call $A$ {\it a differential subalgebra of} $B$. This concept has appeared
in various articles however, the preceding formulation is given in \cite[Section 3.1]{GK}. The following lemma demonstrates
the main property of differential subalgebras which we need for our purpose. The proof is well-known, has been
presented in several articles and somewhat straightforward so we omit it
(see, for example, \cite[Lemma 3.2]{GK} and the references referred in).

\begin{lem}\label{L:symm-diff subalgebras}
Let $A$ be a differential subalgebra of a Banach algebra $B$, and let $r_A$ and $r_B$
be the spectral radius function of $A$ and $B$, respectively.
Suppose that either $A$ and $B$ are simultaneously unital with the same unit or
they are both non-unital.
Then:\\
$(i)$ For every element $a\in A$, $r_A(a)=r_B(a)$;\\
$(ii)$ $A$ is inverse-closed in $B$;\\
$(iii)$ Suppose that $B$ is a Banach $*$-algebra having $A$ as a $*$-subalgebra. If $B$ is symmetric, then so is $A$.
\end{lem}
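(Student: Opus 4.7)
The plan is to attack the three parts in order, since (ii) depends on (i) and (iii) on (ii). In the non-unital case, I first unitize both $A$ and $B$ so they share a common identity, and observe that the differential norm inequality \eqref{Eq:differential norm relation} extends unchanged to the unitizations; this reduces everything to the unital case.

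For (i), I substitute $a=b=a^n$ into \eqref{Eq:differential norm relation} to obtain $\|a^{2n}\|_A\leq 2C\|a^n\|_A\|a^n\|_B$. Taking $(2n)$-th roots and using Gelfand's spectral radius formula yields, in the limit $n\to\infty$, the inequality $r_A(a)\leq r_A(a)^{1/2}r_B(a)^{1/2}$, hence $r_A(a)\leq r_B(a)$. The reverse inequality $r_B(a)\leq r_A(a)$ follows from the continuity of the inclusion $A\hookrightarrow B$, which is implicit in the standing hypothesis of a common unit and the differential norm relation.

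For (ii), I first observe that whenever $r_B(1-a)<1$, part (i) gives $r_A(1-a)<1$, so the Neumann series $a^{-1}=\sum_{n\geq 0}(1-a)^n$ converges absolutely in $A$, showing $a^{-1}\in A$. More generally, if $(\lambda_0-a)^{-1}\in A$ for some $\lambda_0\in \rho_B(a)$, then expanding $(\lambda-a)^{-1}=(\lambda_0-a)^{-1}\bigl(1-(\lambda_0-\lambda)(\lambda_0-a)^{-1}\bigr)^{-1}$ by Neumann series and invoking (i) shows that $(\lambda-a)^{-1}\in A$ for all $\lambda$ in a neighborhood of $\lambda_0$; a parallel argument for limits of sequences in $\rho_B(a)$ (using that $\|(\lambda_n-a)^{-1}\|_B$ stays bounded near a limit point $\lambda\in\rho_B(a)$) shows that the set $U:=\{\lambda\in\rho_B(a):(\lambda-a)^{-1}\in A\}$ is both open and closed in $\rho_B(a)$. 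Since $U\supseteq\{|\lambda|>r_B(a)\}$, it contains the entire unbounded component of $\rho_B(a)$. To also reach bounded components (holes in $\sigma_B(a)$), and in particular to reach $\lambda=0$ when $0$ lies in such a hole, I would invoke the Schulz iteration $x_{n+1}=x_n(2-ax_n)$ starting from $x_0=p(a)\in A$, where $p$ is a polynomial chosen via Runge-type rational approximation of $1/z$ so that $\|1-ap(a)\|_B<1$. Using \eqref{Eq:differential norm relation} inductively together with the identity $1-ax_{n+1}=(1-ax_n)^2$, one checks that $\|1-ax_n\|_A$ decays super-geometrically while $\|x_n\|_A$ stays bounded, so $(x_n)$ is Cauchy in the $A$-norm with limit $a^{-1}$.

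For (iii), given $a\in A$ the element $a^*a$ lies in $A$ since $A$ is a $*$-subalgebra of $B$. Symmetry of $B$ gives $\sigma_B(a^*a)\subseteq[0,\infty)$, and by (ii) we have $\sigma_A(a^*a)=\sigma_B(a^*a)\subseteq[0,\infty)$. Hence $A$ is symmetric. The main obstacle is (ii), specifically the passage from the unbounded component of $\rho_B(a)$ to its bounded components: the clopen argument is essentially local and only yields the unbounded component, so a genuinely global tool such as the Schulz iteration with a polynomial starting point is what makes inverse-closedness work at holes.
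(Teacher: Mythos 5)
The paper itself omits the proof of this lemma, citing \cite[Lemma 3.2]{GK}, so your argument has to stand on its own. Part (i) is correct: the substitution $a=b=a^n$ into \eqref{Eq:differential norm relation}, the $(2n)$-th roots, and the Gelfand formula give $r_A(a)\le r_A(a)^{1/2}r_B(a)^{1/2}$, hence $r_A(a)\le r_B(a)$; for the reverse inequality you should invoke the purely algebraic inclusion $\sigma_B(a)\subseteq\sigma_A(a)$ (invertibility in $A$ implies invertibility in $B$, same unit) rather than continuity of the embedding, which is not among the stated hypotheses. The unitization remark and the deduction of (iii) from (ii) are also fine. The clopen argument in (ii) is correct as far as it goes: it shows $(\lambda-a)^{-1}\in A$ for every $\lambda$ in the unbounded component of $\rho_B(a)$.

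The genuine gap is exactly where you place the main difficulty, and your proposed fix cannot work. If $0$ lies in a bounded component of $\rho_B(a)$, then for \emph{every} polynomial $p$ the spectral mapping theorem gives $\|1-ap(a)\|_B\ge r_B(1-ap(a))=\max_{\lambda\in\sigma_B(a)}|1-\lambda p(\lambda)|$, and since $1-zp(z)$ is entire and equals $1$ at $z=0$, the maximum modulus principle applied to the hole containing $0$ (whose boundary lies in $\sigma_B(a)$) forces this maximum to be at least $1$. So no polynomial starting point for the Schulz iteration exists, and a Runge approximation of $1/z$ would need poles inside the hole, i.e.\ resolvents $(\mu-a)^{-1}$ that are precisely the elements you have not yet shown to lie in $A$. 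This is not a defect of your write-up alone: with only the hypotheses in the statement, part (ii) is false in general --- the disc algebra sitting inside $C(\T)$, both carrying the supremum norm, satisfies \eqref{Eq:differential norm relation} with $C=1/2$ and shares the unit, yet the coordinate function is invertible in $C(\T)$ and not in the disc algebra. The standard route, and the one that covers every application in this paper, uses the $*$-structure of part (iii): for $a\in A$ invertible in a symmetric Banach $*$-algebra $B$, the element $a^*a$ has $B$-spectrum contained in $[0,\infty)$, so $\rho_B(a^*a)$ is connected and your clopen/Neumann argument alone already yields $(a^*a)^{-1}\in A$ and hence $a^{-1}=(a^*a)^{-1}a^*\in A$; the same observation proves (iii) directly. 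You should restructure the proof so that (ii) and (iii) are obtained this way, under the $*$-hypothesis, rather than via a general hole-filling claim.
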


The following theorem which is the main result of this section demonstrates that certain twisted Orlicz algebras can be viewed as differential subalgebras of twisted group algebras. Thus we could determine their symmetry by applying the preceding Lemma together with the classical results concerning the symmetry of twisted group algebras discussed in Section \ref{S:Symmetry of twisted group algebras}.

\begin{thm}\label{T:twisted Orlicz alg-weak add-symm}
Let $G$ be a locally compact unimodular group, let $\Om\in \Zbs$, and let $\sg$ be a symmetric weight associated to $|\Om|$. Suppose that there exists a symmetric weakly subadditive weight $\om$ on $G$
with $1/\om \in L^\Psi(G)$ and $M>0$ such that
\begin{align}\label{Eq:2-cocycle bdd by subadditive weight}
\frac{\sg(st)}{\sg(s)\sg(t)}\leq \frac{M\om(st)}{\om(s)\om(t)} \ \ \ (s,t\in G).
\end{align}
Then $\rho:=\sg/\om$ defines a symmetric weight on $G$ so that $(L^\Phi_\sg(G),\tc, \|\cdot\|_{\Phi,\sg})$ becomes differential $*$-subalgebra of $(L^1_\rho(G),\tc, \|\cdot\|_{1,\rho})$,
where $\tc$ is the twisted convolution coming from $\Om_\T$.
In particular, $(L^\Phi_\sg(G),\tc, \|\cdot\|_{\Phi,\sg})$ is symmetric whenever $(L^1_\rho(G),\tc, \|\cdot\|_{1,\rho})$ is symmetric.
\end{thm}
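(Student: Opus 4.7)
The plan is to view $(L^\Phi_\sg(G),\tc,\|\cdot\|_{\Phi,\sg})$ as a differential $*$-subalgebra of the weighted twisted group algebra $(L^1_\rho(G),\tc,\|\cdot\|_{1,\rho})$ and then invoke Lemma \ref{L:symm-diff subalgebras}(iii). First, a short check that $\rho$ really is a symmetric weight: symmetry of $\rho=\sg/\om$ is immediate; dividing \eqref{Eq:2-cocycle bdd by subadditive weight} through by $\rho(st)$ shows $\rho(st)\leq M\rho(s)\rho(t)$; and substituting $t=s^{-1}$ into \eqref{Eq:2-cocycle bdd by subadditive weight} together with the normalization $\sg(e)=\om(e)=1$ and the symmetry of $\sg,\om$ forces $\om(s)\leq \sqrt{M}\,\sg(s)$, so $1/\rho$ is bounded. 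Standard arguments then give that $(L^1_\rho(G),\tc)$ is a Banach $*$-algebra, and $(L^\Phi_\sg(G),\tc)$ is a Banach $*$-algebra via Lemma \ref{L:twist covn-weight conv}, provided $(L^\Phi(G),\tw)$ is an algebra---which follows from Theorem \ref{T:twisted Orlicz alg} applied with $u=v=MC/\om$, since \eqref{Eq:2-cocycle bdd by subadditive weight} combined with the weak subadditivity of $\om$ yields $|\Om(s,t)|\leq MC(1/\om(s)+1/\om(t))$.

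The continuous inclusion $L^\Phi_\sg(G)\hookrightarrow L^1_\rho(G)$ follows from H\"{o}lder's inequality \eqref{Eq:Holder inequality}: $\|f\rho\|_1 = \|(f\sg)\cdot(1/\om)\|_1 \leq N_\Phi(f\sg)\|1/\om\|_\Psi$. The heart of the proof, and the main technical obstacle, is the pointwise domination of $(f\tc g)\,\sg$ by two ordinary convolutions. Writing $\sg(t) = \sg(s\cdot s^{-1}t)$, applying \eqref{Eq:2-cocycle bdd by subadditive weight} and then the weak subadditivity $\om(t)\leq C(\om(s)+\om(s^{-1}t))$, I obtain
\begin{equation*}
\sg(t) \leq MC\,\sg(s)\,\sg(s^{-1}t)\left(\frac{1}{\om(s^{-1}t)}+\frac{1}{\om(s)}\right).
\end{equation*}
Substituting this into $|(f\tc g)(t)|\,\sg(t)\leq \int |f(s)g(s^{-1}t)|\,\sg(t)\,ds$ and recognizing $\sg/\om=\rho$ yields the pointwise bound
\begin{equation*}
|(f\tc g)(t)|\,\sg(t) \leq MC\bigl(|f\sg|\ast|g\rho|(t)+|f\rho|\ast|g\sg|(t)\bigr).
\end{equation*}
Taking $\|\cdot\|_\Phi$ and invoking Lemma \ref{L:Orlicz L1 mod}(ii) (that $L^\Phi(G)$ is an $L^1(G)$-bimodule under \emph{ordinary} convolution) produces
\begin{equation*}
\|f\tc g\|_{\Phi,\sg}\leq MC'\bigl(\|f\|_{\Phi,\sg}\|g\|_{1,\rho}+\|f\|_{1,\rho}\|g\|_{\Phi,\sg}\bigr),
\end{equation*}
which is precisely the differential norm inequality \eqref{Eq:differential norm relation}. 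Since both algebras carry the same involution \eqref{Eq:Involution}, the inclusion is a $*$-morphism, so $(L^\Phi_\sg(G),\tc)$ is a differential $*$-subalgebra of $(L^1_\rho(G),\tc)$, and Lemma \ref{L:symm-diff subalgebras}(iii) finishes the proof.
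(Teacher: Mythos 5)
Your proof is correct and follows essentially the same route as the paper: verify that $\rho=\sg/\om$ is a symmetric weight, obtain the inclusion $L^\Phi_\sg(G)\subseteq L^1_\rho(G)$ from H\"{o}lder's inequality, derive the differential norm inequality from the decomposition $|\Om(s,t)|\leq CM(1/\om(s)+1/\om(t))$ (the paper cites \eqref{Eq:twisted convolution-diff norm relation} and transfers via $\Lambda_\sg$ where you redo the pointwise estimate directly, but these are the same computation), and conclude with Lemma \ref{L:symm-diff subalgebras}(iii). Your explicit check that $1/\rho$ is bounded, by setting $t=s^{-1}$ in \eqref{Eq:2-cocycle bdd by subadditive weight}, is a detail the paper's proof passes over silently, and it is a worthwhile addition.
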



\begin{proof}
We first note that the relation \eqref{Eq:2-cocycle bdd by subadditive weight} is noting but \eqref{Eq:2-cocycle bdd by subadditive function} by putting $\fL=\om/M$. Thus, by our hypothesis and Corollary \ref{C:twisted Orlicz alg-weak add weight}, $(L^\Phi(G),\tw, \|\cdot\|_{\Phi})$ is a twisted Orlicz algebra, where $\tw$ is the twisted convolution coming from $\Om$. Therefore, by using the equivalent formulation given by Lemma \ref{L:twist covn-weight conv}, $(L^\Phi_\sg(G),\tc, \|\cdot\|_{\Phi,\sg})$ is a Banach $*$-algebra. Now consider the function $\rho:G\to \R^+$ given by
\begin{align}\label{Eq:weight mod weak subadd weight}
\rho(s)=\frac{\sg(s)}{\om(s)} \ \ (s\in G).
\end{align}
We first show that $\rho$ is a symmetric weight on $G$. It is clear that $\rho$ is measurable and symmetric
with $\rho(e)=1$. Also, since $\om$ is a symmetric weight, it is bounded away from 0, i.e. there is $K>0$ so that $$\om(s)\geq K \ \ (s\in G).$$
Therefore, by \eqref{Eq:weight mod weak subadd weight}, $\rho \leq \sg/K$, and so, $\rho$ is locally integrable
since $\sg$ is locally integrable. Finally,
the relation \eqref{Eq:2-cocycle bdd by subadditive weight} is clearly equivalent to
$$\rho(st)\leq M\rho(s)\rho(t) \ \ \  (s,t\in G).$$
That is $\rho$ is a weight on $G$. Our next step is to show that $(L^\Phi_\sg(G),\tc, \|\cdot\|_{\Phi,\sg})$ is differential $*$-subalgebra of $(L^1_\rho(G),\tc, \|\cdot\|_{1,\rho})$. We first note that
\begin{align}\label{Eq:weighted Orlicz space subset weighted L1}
L^\Phi_\sg(G)\subseteq L^1_\rho(G).
\end{align}
To see this, suppose that $f\in L^\Phi_\sg(G)$. Since, by hypothesis,
$1/\om \in L^\Psi(G)$, we have
\begin{eqnarray*}
\|f\|_{1,\rho}&=& \int_G |f(s)|\rho(s) ds \\
&=& \int_G |f(s)|\sg(s)\frac{1}{\om(s)} ds \\
 &\leq & \|f\sg\|_\Phi N_\Psi(1/\om) \ \ (\text{by H\"{o}lder's inequality} \ \eqref{Eq:Holder inequality})\\
 &=& \|f\|_{\Phi,\sg} N_\Psi(1/\om).
\end{eqnarray*}
Hence \eqref{Eq:weighted Orlicz space subset weighted L1} holds. This, in particular, implies that
$(L^\Phi_\sg(G),\tc)$ is a $*$-subalgebra of $(L^1_\rho(G),\tc)$. It now remains to
show that the corresponding relation \eqref{Eq:differential norm relation} holds. Since $\om$ is weakly subadditive, it satisfies \eqref{Eq:weak subadditive relation} for a fixed constant  $C>0$. Thus, if we apply \eqref{Eq:2-cocycle bdd by subadditive weight}, for every $s,t\in G$, we have
$$|\Om(s,t)|=\frac{\sg(st)}{\sg(s)\sg(t)}\leq \frac{M\om(st)}{\om(s)\om(t)}\leq \frac{CM}{\om(s)}+\frac{CM}{\om(t)}=u(s)+u(t),$$
where $u:=CM/\om$. Since, by our hypothesis, $u\in L^\Psi(G)$, it follows from \eqref{Eq:twisted convolution-diff norm relation} that
$$\|f\tw g\|_{\Phi} \leq \|fu\|_{1} \|g\|_{\Phi}+\|f\|_{\Phi}\|gu\|_{1} \ \ \ (f,g\in L^\Phi(G)).$$
Alternatively, if we apply Lemma \ref{L:twist covn-weight conv} and use the equivalent weighted reformulation, we get
\begin{eqnarray*}
\|f\tc g\|_{\Phi,\sg} &\leq& \|f\sg u\|_{1} \|g\sg\|_{\Phi}+\|f\sg\|_{\Phi}\|g\sg u\|_{1} \\
&=& CM\Big(\|f\|_{1,\rho} \|g\|_{\Phi,\sg }+\|f\|_{\Phi,\sg} \|g\|_{1,\rho}\Big),
\end{eqnarray*}
for every $f,g\in L^\Phi_\sg(G)$. Hence  $(L^\Phi_\sg(G),\tc, \|\cdot\|_{\Phi,\sg})$ is a differential $*$-subalgebra of $(L^1_\rho(G),\tc, \|\cdot\|_{1,\rho})$. The final statement follows from
Lemma \ref{L:symm-diff subalgebras}.
\end{proof}

We finish this section by pointing out that the preceding theorem is particularly useful when investigating symmetry of twisted Orlicz algebra of compactly generated groups with polynomial growth as demonstrated in the following section. However, we can also apply it to other cases.

\begin{exm}
Let $G$ and $\om$ be the ones considered in Example \ref{E:twist-locally finite}
with the assumption that $1/\om\in L^1(G)\cap L^\infty(G)\subseteq L^\Psi(G)$.
Suppose that $\GT$ is the central extension of $G$ considered in Section \ref{S:Symmetry of twisted group algebras}. Take $\Om_\T\in \ZTb$, $p>0$ and put $\rho=\om^p$. It is
clear that $\tilde{\rho}$, defined by \eqref{Eq:central extention weight}, is a weakly additive symmetric weight on $\GT$ with $\tilde{\rho}^{-p} \in L^1(\GT)$. Therefore, by \cite[Theorem 1]{P}, $(L_{\tilde{\rho}}^1(\GT),*)$ is symmetric, and so, by Theorem \ref{T:symm-twisted group alg}, $(L_{{\rho}}^1(G),\tc)$ is symmetric, where $\tc$ is the twisted convolution coming from $\Om_\T$. Hence if we apply Theorem \ref{T:twisted Orlicz alg-weak add-symm} for $\sg:=\om^{p+1}$ and $\rho:=\om^p$, it follows that $(L_{\om^{p+1}}^\Phi(G),\tc)$ is a symmetric twisted Orlicz algebra.
\end{exm}

\section{Groups with polynomial growth}\label{S:Groups poly. growth}

\subsection{General theory}\label{S:Groups poly. growth-definition}
Let $G$ be a compactly generated group with a fixed compact symmetric generating neighborhood $U$ of the identity of the group $G$.
$G$ is said to have {\it polynomial growth} if there exist $C>0$ and $d\in \N$ such that for every $n\in \N$
	$$\lambda(U^n)\leq Cn^d \ \ \ (n\in \N).$$
Here $\lambda(S)$ is the Haar measure of any measurable $S\subseteq G$ and
	$$U^n=\{u_1\cdots u_n : u_i\in U, i=1,\ldots, n \}.$$
The smallest such $d$ is called {\bf the order of growth} of $G$ and it is denoted by $d(G)$.
It can be shown that the order of growth of $G$ does not depend on the symmetric generating set $U$, i.e. it is a universal constant for $G$.
Also, by \cite[Lemma 2.3]{FGL}, the compact symmetric neighborhood $U$ can be chosen so that it has a strict polynomial growth, i.e.
there are positive numbers $C_1$ and $C_2$ such that
\begin{align}\label{Eq:stric poly growth}
C_1n^d \leq \lambda(U^n)\leq C_2n^d \ \ \ (n\in \N).
\end{align}
It is immediate that compact groups are of polynomial growth. More generally, every $G$ with the property that the conjugacy class of
every element in $G$ is relatively compact has polynomial growth \cite[Theorem 12.5.17]{Pal}. Also every (compactly generated) nilpotent
group (hence an abelian group) has polynomial growth \cite[Theorem 12.5.17]{Pal}.

Using the generating set $U$ of $G$ we can define a {\it length function} $\tau_U : G \to [0, \infty)$ by
\begin{align}\label{Eq:length function}
\tau_U(x)=\inf \{n\in \N : x\in U^n \} \ \ \text{for} \ \ x \neq e, \ \ \tau_F(e)=0.
\end{align}
When there is no fear of ambiguity, we write $\tau$ instead of $\tau_U$.
It is straightforward to verify that $\tau$ is a symmetric subadditive function on $G$, i.e.
\begin{align}\label{Eq:lenght func-trai equality}
\tau(xy)\leq \tau(x)+\tau(y) \ \ \text{and} \ \ \tau(x)=\tau(x^{-1})\ \ (x,y\in G).
\end{align}
We can use $\tau$ to define various weights on $G$.
More precisely, for every $0< \alpha \leq 1$, $\beta \geq 0$, $\gamma >0$, and $C>0$,
we can define the {\it polynomial weight} $\om_\beta$ on $G$ of order $\beta$ by
\begin{align}\label{Eq:poly weight-defn}
\om_\beta(s)=(1+\tau(s))^\beta  \ \ \ \ (s\in G),
\end{align}
and the {\it subexponential weights} $\sg_{\alpha, C}$ and $\rho_{\beta,C}$ on $G$ by
\begin{align}\label{Eq:Expo weight-defn}
\sg_{\alpha,C}(s)=e^{C\tau(x)^\alpha} \ \ \ \ (s\in G)
\end{align}
and
\begin{align}\label{Eq:Expo weight II-defn}
\rho_{\gamma,C}(s)=e^\frac{C\tau(s)}{(\ln (1+\tau(s)))^\gamma} \ \ \ \ (s\in G).
\end{align}

\subsection{Symmetric twisted Orlicz algebras over groups with polynomial growth}\label{S:Symm-twisted Orlicz alg-PG groups}

Throughout the rest of this section, we assume that $G$ is a compactly generated group of polynomial growth.

Let $\om$ be a symmetric weight on $G$.
We say that $\om$ satisfies the {\bf GRS-condition} if for every $s\in G$,
\begin{align*}\label{Eq:weight-GRS condition}
\lim_{n\to \infty} \om(s^n)^{1/n}=1.
\end{align*}
In \cite{FGL}, it is shown that the GRS-condition (among several others) characterizes precisely the symmetry of $L^1_\om(G)$ (see also
\cite{FGLLM}). Their work heavily depends on a method develops by Hulanicki (\cite{Hul}) as well as the symmetry of $L^1(G)$
(\cite[Section 3]{FGL}). This result was applied in \cite{KM} to determine the symmetry of $L^p_\om(G)$ for several cases
including polynomial weights defined in \eqref{Eq:poly weight-defn}.

Our goal is to modify and extend the results in \cite{KM} to twisted Orlicz algebra over $G$. We start with extending the main
result of \cite{FGL} to twisted group algebras over $G$ by applying Proposition \ref{P:symm-twisted group alg}.

\begin{thm}\label{T:symm-twisted group alg}
Let $\Om\in \Zbw$, and let $\om$ be a symmetric weight associated to $|\Om|$.
If $\om$ satisfies the GRS-condition, then $(L^1_\om(G),\tc)$ is symmetric, where $\tc$ is the twisted convolution coming from $\Om_\T$.
\end{thm}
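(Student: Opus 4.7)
The plan is to invoke Proposition \ref{P:symm-twisted group alg}, which reduces symmetry of $(L^1_\om(G),\tc)$ to symmetry of the (untwisted) weighted group algebra $(L^1_{\tilde{\om}}(\GT),*)$ on the central extension $\GT=G\times \T$. Once the problem is reduced to an honest weighted group algebra on $\GT$, the strategy is to apply the Fendler--Gr\"ochenig--Leinert theorem (the main result of \cite{FGL}), which asserts that for a compactly generated group of polynomial growth, a symmetric weight satisfying the GRS-condition produces a symmetric weighted group algebra.

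First, I would check that $\GT$ falls into the scope of the FGL theorem. Since $G$ is compactly generated of polynomial growth with generating neighborhood $U$ satisfying \eqref{Eq:stric poly growth}, and since $\T$ is compact, the set $\widetilde{U}:=U\times \T$ is a compact symmetric neighborhood of the identity in $\GT$ which generates $\GT$. For each $n$, the set $\widetilde{U}^n$ (computed with respect to the twisted product) has the same underlying Haar measure as $U^n\times \T$, because the $\T$-component is a cocycle-twisted accumulation of unit-modulus factors and the Haar measure on $\GT$ is the product measure. Consequently $\lambda_{\GT}(\widetilde{U}^n)\leq C\lambda(U^n)\leq C'n^d$, so $\GT$ also has polynomial growth.

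Next, I would verify that $\tilde{\om}$, as defined in \eqref{Eq:central extention weight}, is a symmetric weight on $\GT$ satisfying the GRS-condition. Symmetry and the submultiplicativity (up to a constant) were already recorded in the discussion preceding Proposition \ref{P:symm-twisted group alg}. For the GRS-condition: a direct induction on the product $(s,\alpha)\cdot(t,\beta)=(st,\alpha\beta\Om_\T(s,t))$ shows that the first coordinate of $(s,\alpha)^n$ is precisely $s^n$, hence
\begin{equation*}
\tilde{\om}((s,\alpha)^n)^{1/n}=\om(s^n)^{1/n}\to 1,
\end{equation*}
by the GRS-condition on $\om$. Thus $\tilde{\om}$ satisfies the GRS-condition on $\GT$.

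With these ingredients in place, the FGL theorem gives symmetry of $(L^1_{\tilde{\om}}(\GT),*)$, and Proposition \ref{P:symm-twisted group alg} then transports symmetry down to the ideal $(L^1_\om(G),\tc)$, completing the argument. The main (essentially only) obstacle is the polynomial-growth verification for $\GT$, which hinges on the fact that the cocycle $\Om_\T$ is $\T$-valued, so it does not distort Haar measure when iterating products; once this is noted, everything else is automatic from the hypotheses and the cited results.
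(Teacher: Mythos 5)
Your proposal is correct and follows essentially the same route as the paper: pass to the central extension $\GT=G\times\T$ via Proposition \ref{P:symm-twisted group alg}, note that $\GT$ inherits polynomial growth as a compact extension of $G$ and that $\tilde{\om}$ is a symmetric weight satisfying the GRS-condition, and then apply the Fendler--Gr\"ochenig--Leinert theorem. Your extra details (the generating set $U\times\T$ and the observation that the first coordinate of $(s,\alpha)^n$ is $s^n$) simply flesh out steps the paper declares "easy to see."
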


\begin{proof}
Let $\GT:=G\times \T$ with the locally compact group structure coming from $\Om_\T$ as explained in Section \ref{S:Symmetry of twisted group algebras}.
Since $G$ has polynomial growth and $\GT$ is just a compact extension of $G$, then so does $\GT$.
Also it is easy to see that $\tilde{\om}:=\om\times 1$ is a symmetric weight on $\GT$ satisfying the GRS-condition.
Hence, by \cite[Theorem 1.3]{FGL}, $(L^1_{\tilde{\om}}(\GT),*)$ is symmetric (with the ordinary convolution).
Thus, by Proposition \ref{P:symm-twisted group alg}, $(L^1_\om(G),\tc)$ is symmetric.
\end{proof}

Due to the more complicated nature of $L^\Phi_\om(G)$, it is not clear how one can have a result as good as Theorem \ref{T:symm-twisted group alg} for twisted Orlicz algebras. However, we can still
show that for lots of important classes of weights, including the polynomial and subexponential weights
defined in Section \ref{S:Groups poly. growth-definition}, we can determine when we have symmetric twisted Orlicz algebras.

We start with following theorem which deals with the case of weakly additive weights.

\begin{thm}\label{T:twisted Orlicz alg-weak additive weight}
Let $\Om_\T\in \ZTb$, let $\tc$ be the twisted convolution coming from $\Om_\T$, and let $\om$ be symmetric weakly subadditive weight
on $G$ such that  $1/\om \in L^\Psi(G)$. Then $(L_{\om}^\Phi(G), \tc)$ is a symmetric twisted Orlicz algebra.
\end{thm}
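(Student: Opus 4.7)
The plan is to apply Theorem~\ref{T:twisted Orlicz alg-weak add-symm} with the given weight $\om$ playing \emph{both} the role of the target weight $\sg$ and the role of the auxiliary weakly subadditive weight. This choice collapses the ambient weighted $L^1$-algebra produced by that theorem to the unweighted twisted group algebra $(L^1(G),\tc)$, whose symmetry will then be the only nontrivial input.

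First, I lift $\Om_\T$ to a $\Cm$-valued cocycle. Set
$$\Om(s,t):=\frac{\om(st)}{\om(s)\om(t)}\,\Om_\T(s,t)\qquad(s,t\in G),$$
so that $|\Om|$ is the 2-coboundary determined by $\om$. Weak subadditivity of $\om$, combined with $1/\om\in L^\infty(G)$ (which is built into the notion of weight in Definition~\ref{D:bound 2 cocycles}), yields
$$|\Om(s,t)|\leq \frac{C}{\om(s)}+\frac{C}{\om(t)}\leq 2C\|1/\om\|_\infty,$$
so $\Om\in\Zbw$; symmetry of $\om$ then places $\Om$ in $\Zbs$. Lemma~\ref{L:twist covn-weight conv} identifies the twisted convolution $\tw$ coming from $\Om$ on $L^\Phi(G)$ with the $\Om_\T$-twisted convolution $\tc$ on the weighted space $L^\Phi_\om(G)$ via the isometry $\Lambda_\om$.

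Next, I invoke Theorem~\ref{T:twisted Orlicz alg-weak add-symm} with $\sg:=\om$ and with the auxiliary weakly subadditive weight also equal to $\om$. The required domination \eqref{Eq:2-cocycle bdd by subadditive weight} then holds trivially with $M=1$, and the quotient weight $\rho:=\sg/\om$ is identically $1$. The theorem therefore exhibits $(L^\Phi_\om(G),\tc,\|\cdot\|_{\Phi,\om})$ as a differential $*$-subalgebra of the unweighted twisted group algebra $(L^1(G),\tc,\|\cdot\|_1)$.

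Finally, Theorem~\ref{T:symm-twisted group alg} applied to the trivial weight $\om\equiv 1$ (which is symmetric and satisfies the GRS-condition vacuously, and is associated to the trivial coboundary $|\Om_\T|\equiv 1$) shows that $(L^1(G),\tc)$ is symmetric; note that unimodularity of $G$, needed in Theorem~\ref{T:twisted Orlicz alg-weak add-symm}, is automatic for compactly generated groups of polynomial growth. Since $(L^\Phi_\om(G),\tc)$ and $(L^1(G),\tc)$ are simultaneously unital (precisely when $G$ is discrete), Lemma~\ref{L:symm-diff subalgebras}(iii) transfers symmetry down to $(L^\Phi_\om(G),\tc)$. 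The whole argument is essentially bookkeeping; the only delicate point is choosing the $\Cm$-valued lift $\Om$ so that the two weights appearing in Theorem~\ref{T:twisted Orlicz alg-weak add-symm} coincide, thereby reducing the comparison to the (classically symmetric) unweighted $L^1(G)$.
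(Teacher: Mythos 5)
Your argument is exactly the paper's: take $\sg=\om$ (and the auxiliary weakly subadditive weight equal to $\om$ as well) in Theorem~\ref{T:twisted Orlicz alg-weak add-symm}, so that $\rho\equiv 1$ and the differential $*$-subalgebra sits inside the unweighted $(L^1(G),\tc)$, whose symmetry comes from Theorem~\ref{T:symm-twisted group alg} with the trivial weight. The extra bookkeeping you supply (the $\Cm$-valued lift $\Om$, boundedness of $|\Om|$, automatic unimodularity for polynomial-growth groups) is correct and only fills in details the paper leaves implicit.
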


\begin{proof}
By Theorem \ref{T:symm-twisted group alg}, $(L^1(G),\tc)$ is symmetric. Hence, the result is an immediate consequence
of Theorem \ref{T:twisted Orlicz alg-weak add-symm} by putting $\sg=\om$.
\end{proof}

\begin{cor}\label{C:twisted Orlicz alg-poly weight}
Let $\Om_\T\in \ZTb$, let $\tc$ be the twisted convolution coming from $\Om_\T$, and let $\om_\beta$ be the polynomial weight defined
in \eqref{Eq:poly weight-defn}. Then $(L_{\om_\beta}^\Phi(G), \tc)$ is a symmetric twisted Orlicz algebra if $\beta> \frac{d}{l}$.
Here $d:=d(G)$ is the degree of the growth of $G$ and
$l\geq 1$ is such that $\lim_{x\to 0^+}\frac{\Psi(x)}{x^l}$ exists.
\end{cor}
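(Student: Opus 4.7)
The plan is to apply Theorem \ref{T:twisted Orlicz alg-weak additive weight} directly to the polynomial weight $\om_\beta$. This reduces the corollary to verifying two things: that $\om_\beta$ is a symmetric, weakly subadditive weight on $G$, and that $1/\om_\beta \in L^\Psi(G)$.

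The first verification is elementary. Symmetry of $\om_\beta$ is inherited from $\tau(s) = \tau(s^{-1})$, and $\om_\beta(e)=1$ with $\om_\beta \geq 1$ so that $1/\om_\beta \in L^\infty(G)$. Subadditivity of $\tau$ (equation \eqref{Eq:lenght func-trai equality}) gives $1+\tau(st) \leq (1+\tau(s))(1+\tau(t))$, from which $\om_\beta(st) \leq \om_\beta(s)\om_\beta(t)$, so $\om_\beta$ is a weight in the sense of Definition \ref{D:bound 2 cocycles}. For weak subadditivity in the sense of \eqref{Eq:weak subadditive relation}, I would combine $1+\tau(st) \leq (1+\tau(s)) + (1+\tau(t))$ with the elementary inequality $(a+b)^\beta \leq C_\beta(a^\beta + b^\beta)$ (valid for $a,b\geq 0$ and $\beta>0$, either by concavity when $0<\beta\leq 1$ or by $(a+b)^\beta \leq 2^\beta\max(a^\beta,b^\beta)$ when $\beta\geq 1$) applied with $a=1+\tau(s)$, $b=1+\tau(t)$.

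For the second and substantive verification, the hypothesis on $\Psi$ supplies $\delta,K>0$ with $\Psi(x)\leq Kx^l$ for $0\leq x\leq \delta$. Since $\om_\beta\geq 1$, choosing $\alpha\in(0,\delta]$ gives $\alpha/\om_\beta(s)\leq \delta$ everywhere, hence
\[
\int_G \Psi\!\left(\frac{\alpha}{\om_\beta(s)}\right)ds \;\leq\; K\alpha^l \int_G (1+\tau(s))^{-\beta l}\, ds,
\]
so by \eqref{Eq:Orlicz defn} it suffices to show $\int_G (1+\tau(s))^{-\gamma}\,ds<\infty$ for $\gamma:=\beta l > d$. Decomposing $G$ into the shells $U^{n+1}\setminus U^n$ (on which $\tau\geq n+1$) and invoking the polynomial growth bound \eqref{Eq:stric poly growth}, I would reduce the question to convergence of
\[
\sum_{n=0}^{\infty} \lambda(U^{n+1}\setminus U^n)(n+2)^{-\gamma}.
\]
A crude per-shell estimate $\lambda(U^{n+1}\setminus U^n)\leq C_2(n+1)^d$ only yields convergence for $\gamma>d+1$, so the \emph{main obstacle} is to extract the sharp threshold $\gamma>d$. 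For this I would sum by parts (Abel summation), rewriting the sum in terms of the cumulative volumes $\lambda(U^n)\leq C_2 n^d$ against the discrete derivative of $(n+2)^{-\gamma}$, which decays like $(n+2)^{-\gamma-1}$; the resulting series $\sum n^d (n+2)^{-\gamma-1}$ converges precisely when $\gamma>d$, and the boundary term $\lambda(U^N)(N+2)^{-\gamma}$ vanishes in the same range.

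Taking $\gamma=\beta l$ shows $1/\om_\beta\in L^\Psi(G)$ whenever $\beta>d/l$. The hypotheses of Theorem \ref{T:twisted Orlicz alg-weak additive weight} are thus met, and that theorem immediately yields that $(L^\Phi_{\om_\beta}(G),\tc)$ is a symmetric twisted Orlicz algebra.
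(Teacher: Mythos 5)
Your proposal is correct and follows essentially the same route as the paper: reduce to Theorem \ref{T:twisted Orlicz alg-weak additive weight} by checking weak subadditivity of $\om_\beta$, bound $\Psi(x)\leq Mx^l$ near $0$, and prove $\int_G(1+\tau(s))^{-\beta l}\,ds<\infty$ by decomposing into the shells $U^n\setminus U^{n-1}$ and applying Abel summation against the strict polynomial growth estimate \eqref{Eq:stric poly growth} to reach the sharp threshold $\beta l>d$. The summation-by-parts step you identify as the main obstacle is exactly the device used in the paper's proof.
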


\begin{proof}
Suppose that $\beta> \frac{d}{l}$. It is easy to check that $\om_\beta$ is weakly subadditive. In fact,
$$\om_\beta(st) \leq 2^\beta \Big(\om_\beta(s)+\om_\beta(t)\Big) \ \ (s,t\in G).$$
Hence, by Theorem \ref{T:twisted Orlicz alg-weak additive weight}, it suffices to show that $1/\om_\beta \in L^\Psi(G)$. Since $\lim_{x\to 0^+}\frac{\Psi(x)}{x^l}$ exists, $\frac{\Psi(x)}{x^l}$ is bounded
when $x$ approaches $0^+$. Thus there exist
$M,N>0$ such that
$$\Psi(x) \leq Mx^l \ \ \ \ (0\leq x \leq N).$$
In particular,
$$\Psi\Big(\frac{N}{\om_\beta(s)}\Big) \leq \frac{MN^l}{\om_\beta(s)^l} \ \ \ \ (s\in G).$$
Let $U$ be the symmetric neighborhood of the identity in $G$ for which
\eqref{Eq:stric poly growth} holds. In particular,
\begin{eqnarray}\label{Eq:stric poly growth-II}
\lambda(U^n)\sim n^d \ \ (n\in \N).
\end{eqnarray}
For $C:=MN^l$, we have
\begin{eqnarray*}
\int_{G} \Psi(\frac{N}{\om_\beta(s)}) ds &\leq & \int_{G} \frac{C}{(1+\tau(s))^{l\beta}} ds\\
&\leq & C+C\sum_{n=1}^\infty \int_{U^n\setminus U^{n-1}} \frac{1}{(1+\tau(s))^{l\beta}} ds \\
&= &  C+C\sum_{n=1}^\infty \displaystyle \frac{\lambda(U^n \setminus U^{n-1})}{(1+n)^{l\beta}}\\
&= &  C+C\sum_{n=1}^\infty \displaystyle \frac{\lambda(U^n)-\lambda(U^{n-1})}{(1+n)^{l\beta}}+\lim_{n\to \infty} \frac{\lambda(U^n)}{(1+n)^{l\beta}}\\
&\leq &  C+\frac{C\lambda(\{e\})}{2^{l\beta}}+C\sum_{n=1}^\infty \displaystyle \lambda(U^n)\Big[\frac{1}{(1+n)^{l\beta}}-\frac{1}{(2+n)^{l\beta}}\Big],
\end{eqnarray*}
as, by \eqref{Eq:stric poly growth-II}, $\lim_{n\to \infty} \displaystyle \frac{\lambda(U^n)}{(1+n)^{l\beta}}=0$. Furthermore, again by applying \eqref{Eq:stric poly growth-II}, the series in the last line of the preceding expression converges since
$$\lambda(U^n)\Big[\frac{1}{(1+n)^{l\beta}}-\frac{1}{(2+n)^{l\beta}}\Big] \sim \frac{n^d}{(1+n)^{l\beta+1}}\sim n^{d-l\beta-1}$$
and $d-l\beta<0$. This completes the proof (see \eqref{Eq:Orlicz defn}).
\end{proof}

\begin{rem}
(i) If $\Psi$ is a Young function, it is convex and positive. In this case, it is easy to see that the function $\frac{\Psi(x)}{x}$ is positive and decreasing on $\R^+$, and so, $\lim_{x\to 0^+}\frac{\Psi(x)}{x}$ exists. Therefore, in Corollary \ref{C:twisted Orlicz alg-poly weight}, we can always assume that $l=1$. Of course, we would like to pick the largest possible $l$ to get the most optimal estimation.

(ii) We recall from \cite[Page 20]{rao} that two Young functions $\Psi_1$ and $\Psi_2$ are
{\it strongly equivalent} and write $\Psi_1 \approx \Psi_2$ if there exists $0<a\leq b<\infty$ such that $$\Psi_1(ax)\leq \Psi_2(x)\leq \Psi_1(bx) \ \ \ (x\geq 0).$$
It is clear from the definition of the Orlicz space \eqref{Eq:Orlicz defn} that the
strongly equivalent Young functions generate the same Orlicz space. In particular,
this allows us to consider different strongly equivalent Young functions in our
computation, say for example, to verify the conditions in Corollary \ref{C:twisted Orlicz alg-poly weight}.
\end{rem}

\begin{exm}
Let $\Z^d$ be the group of $d$-dimensional integers.
A usual choice of generating set for $\Z^d$ is
	$$F=\{(x_1,\ldots, x_d) \mid x_i\in \{-1,0,1\} \},$$
It is straightforward to see that
	$$|F^n|=(2n+1)^d \ \ \ (n=0,1,2,\ldots)$$
so that the order of the growth of $\Z^d$ is $d$.

Now suppose that $\Om_\T: \Z^d \times \Z^d \to \T$ is a 2-cocycle, $\tc$ is the twisted convolution coming from $\Om_\T$
and $\om_\beta$ is the polynomial weight on $\Z^d$ defined in \eqref{Eq:poly weight-defn}.

(i) If $\Phi(x)=\frac{x^p}{p}$ ($1<p<\infty$), then $\Psi(x)=\frac{x^q}{q}$ with $\frac{1}{p}+\frac{1}{q}=1$ so that
$(L_{\om_\beta}^\Phi(\Z^d),\tc)$ is a symmetric Banach $*$-algebra provided that $\beta >d/q$.

(ii) Suppose that there is $\delta>0$ such that $\Psi''$ exists and is continuous on $[0,\delta]$ with
$\Psi_+'(0)=0$. Then, by applying repeatedly the l'Hospital's rule,
$$\lim_{x\to 0^+}\frac{\Psi(x)}{x^2}=\lim_{x\to 0^+}\frac{\Psi'(x)}{2x}=\frac{\Psi''(0)}{2}.$$
Therefore $(L_{\om_\beta}^\Phi(\Z^d),\tw)$ is a symmetric Banach $*$-algebra if $\beta >d/2$. These can be applied to various Young functions such as a few we point out below
(see \cite[Proposition 2.11]{ML} and \cite[Page 15]{rao}).

(1) If $\Phi(x)=x\ln (1+x)$, then $\Psi(x) \approx \cosh x-1$.

(2) If $\Phi(x)=\cosh x-1$, then $\Psi(x)\approx x\ln (1+x)$.

(3) If $\Phi(x)=e^x-x-1$, then $\Psi(x)=(1+x)\ln(1+x)-x$.

(4) If $\Phi(x)=(1+x)\ln(1+x)-x$, then $\Psi(x)=e^x-x-1$.

\end{exm}

\subsection{The case of subexponential weights}\label{S:subexp weights-symm Orlicz space}

Throughout this article, in order to obtain the (symmetric) twisted Orlicz algebra $(L^\Phi(G),\tw)$, we relay heavily on the existence of two suitable functions in $L^\Psi_+(G)$ whose sum dominates the 2-cocycle
$|\Om|$. However, it is not
immediately clear whether this can be done if the weight associated to $|\Om|$ is
not weakly additive. A similar obstacle occurred in \cite{LSS} while investigating
the possibility of the isomorphism of a weighted group algebra of a finitely generated group of polynomial growth with an operator algebra. There, it was needed for $|\Om|$ to be dominated by the sum of two absolutely square summable functions.

In \cite{LSS}, a method was developed to bypass the obstacle pointed out in the preceding paragraph. More precisely, it is shown in \cite[Theorem 3.3]{LSS} that if $\om_\beta$ and $\sg_{\alpha,C}$ are weights defined in \eqref{Eq:poly weight-defn} and \eqref{Eq:Expo weight-defn}, respectively, then $\om:=\sg_{\alpha,C}/\om_\beta$ is also a weight on $G$ provided that $0<\alpha<1$ and $\beta>0$ is large enough. This, in particular, allows
us to use the weak subadditivity of $\om_\beta$ to obtain our desired result.

In this section, we will apply the approach in \cite{LSS} (mentioned above) to
show that in most cases, we get symmetric twisted Orlicz algebras if
 $|\Om|$ is determined by either weights $\sg_{\alpha,C}$ or $\rho_\gamma$
(the latter defined in  \eqref{Eq:Expo weight II-defn}).  We start with the following technical lemma which we will use to show that, similar to the case of $\sg_{\alpha,C}$, the function $\om:=\rho_\gamma/\om_\beta$ is also a weight on $G$ provided that $\gamma>0$.

\begin{lem}\label{L:difference concave functions}
Let $\beta, \gamma>0$ and $C>0$. Consider the function $p:[0,\infty) \to \R$ defined by
\begin{align}
p(x)=\frac{Cx}{(\ln (e+x))^\beta}-\gamma \ln(1+x).
\end{align}
Then there are $x_0>0$ and $M>0$ such that\\
$(i)$ $p$ is positive and increasing on $[x_0,\infty)$;\\
$(ii)$ $p'$ is positive and decreasing on $[x_0,\infty)$;\\
$(iii)$ for all $x\geq x_0$ and $y\geq 0$,
\begin{align}\label{Eq:subexpo II-poly weight subadditive}
0<p(x+y)\leq p(x)+p(y)+M.
\end{align}
\end{lem}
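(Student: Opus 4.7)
The plan is to establish (i) and (ii) through a direct asymptotic analysis of $p$, $p'$, and $p''$, and then deduce (iii) from the concavity and positivity already obtained together with a uniform bound on a compact interval.

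For (i), note first that $x/(\ln(e+x))^\beta \to \infty$ strictly faster than any constant multiple of $\ln(1+x)$, so $p(x) \to \infty$ and $p$ is positive on $[x_1,\infty)$ for some $x_1>0$. Differentiating directly,
\[
p'(x) = \frac{C}{(\ln(e+x))^\beta}\left(1 - \frac{\beta x}{(e+x)\ln(e+x)}\right) - \frac{\gamma}{1+x}.
\]
Since $\beta x/((e+x)\ln(e+x)) \to 0$ and $\gamma/(1+x)$ decays like $1/x$, the leading term $C/(\ln(e+x))^\beta$ dominates both correction terms, so $p'(x)>0$ for all sufficiently large $x$.

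For (ii), I differentiate once more. Carefully grouping terms gives
\[
p''(x) = -\frac{C\beta}{(e+x)(\ln(e+x))^{\beta+1}} + R(x) + \frac{\gamma}{(1+x)^2},
\]
where $R(x)$ collects the remaining contribution from differentiating the correction term in $p'$ and satisfies $R(x) = O\bigl(x^{-1}(\ln x)^{-\beta-2}\bigr)$. The first summand is negative and of order $x^{-1}(\ln x)^{-\beta-1}$, while both $R(x)$ and $\gamma/(1+x)^2$ are of strictly smaller order. Consequently $p''(x)<0$ for all large $x$, and $p'$ is therefore decreasing there. Choosing $x_0$ large enough that $p>0$, $p'>0$, and $p''<0$ simultaneously hold on $[x_0,\infty)$ then yields (i) and (ii).

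For (iii), fix $x \geq x_0$ and set $f(y) := p(x+y) - p(x) - p(y)$ on $[0,\infty)$. One checks directly that $p(0)=0$, so $f(0)=0$. By (ii), for $y \geq x_0$ we have $p'(x+y) \leq p'(y)$, hence $f'(y) = p'(x+y) - p'(y) \leq 0$, so $f$ is non-increasing on $[x_0,\infty)$. On the compact interval $[0,x_0]$, the quantity $f(y)$ is bounded uniformly in $x$: using that $p'$ is decreasing on $[x_0,\infty)$ and $x \geq x_0$,
\[
|p(x+y)-p(x)| = \int_x^{x+y} p'(t)\,dt \leq x_0\,p'(x_0),
\]
while $|p(y)|$ is bounded by $\max_{[0,x_0]}|p|$. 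Hence $M := x_0\,p'(x_0) + \max_{[0,x_0]}|p|$ depends only on $p$ and $x_0$ and satisfies $f(y) \leq M$ for every $y \geq 0$, giving the inequality of (iii); the positivity $p(x+y)>0$ follows from (i) since $x+y \geq x_0$.

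The main obstacle is the careful book-keeping in step (ii), where three decaying terms of different orders compete in $p''$ and one must verify that the unique negative summand strictly dominates the two positive ones in absolute value for $x$ large. Once this dominance is secured, parts (i) and (iii) follow from essentially standard calculus arguments built on top of the concavity established in (ii).
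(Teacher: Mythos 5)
Your proof is correct and follows essentially the same route as the paper: establish $p>0$, $p'>0$, $p''<0$ on a tail $[x_0,\infty)$ by direct asymptotic analysis of the derivatives, then derive the approximate subadditivity in (iii) from the monotonicity of $p'$ plus a uniform bound coming from the compact piece $[0,x_0]$ (the paper fixes $y$ and varies $x$, invoking the Mean Value Theorem and the boundedness of $p'$ on $[0,\infty)$, whereas you fix $x$ and vary $y$ — a mirror-image of the same argument).
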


\begin{proof}
It is clear that $p(x)\to \infty$ as $x\to \infty$. Also we have
$$p'(x)=C(\ln(e+x))^{-\beta}-\frac{C\beta x}{e+x}(\ln(e+x))^{-1-\beta}-\frac{\gamma}{1+x}.$$
Hence, by modifying $p'(x)$ as
$$p'(x)=C(\ln(e+x))^{-\beta}(1-\frac{\beta x}{(e+x)\ln(e+x)}-\frac{\gamma
(\ln(e+x))^{\beta}}{C(1+x)})$$
and using the relations $$\lim_{x\to \infty} \frac{x}{(e+x)\ln(e+x)}=\lim_{x\to \infty} \frac{(\ln(e+x))^{\beta}}{1+x}=0,$$ it follows
that $p'(x)>0$ when $x$ is a positive large number. On the other hand, by computing the 2nd derivative of $p$, we get
$$p''(x)=\frac{C\beta (\ln(e+x))^{-2-\beta}}{(e+x)^2} q(x),$$
where
\begin{align*}
q(x)=(1+\beta)x-2e\ln (e+x)-x\ln (e+x)+\frac{\gamma (\ln(e+x))^{2+\beta}(e+x)^2}{C\beta(1+x)^2}.
\end{align*}
In particular, $p''(x)<0$ if and only if $q(x)<0$. Now if we simplify $q(x)$ as
\begin{align*}
q(x)=x[1+\beta-\frac{\ln (e+x)}{2}]+x\ln (e+x)[\frac{\gamma (\ln(e+x))^{1+\beta}(e+x)^2}{C\beta x(1+x)^2}-\frac{2e}{x}-\frac{1}{2}],
\end{align*}
 then one easily sees that  $q(x)<0$ when $x$ is a large positive number. Thus there is
 $x_0>0$ such that $p,p'>0$ and $p''<0$ on $[x_0,\infty)$. In particular, $p$ is increasing and $p'$ is decreasing on $[x_0,\infty)$.
 This proves (i) and (ii).

For (iii), fixed $y\geq 0$. Define the function $r:[x_0,\infty)\to \R$ with
$$r(x)=p(x+y)-p(x) \ \ \ \ (x\geq x_0).$$
Since $p'$ is decreasing on $[x_0,\infty)$,
 we have that $r'\leq 0$ on $[x_0,\infty)$. Thus $r(x)\leq r(x_0)$, or equivalently,
 $p(x+y)-p(x)\leq p(x_0+y)-p(x_0)$ for every $x\in [x_0,\infty)$. Hence, by applying
 the Mean Value Theorem,
 $$p(x+y)-p(x)-p(y)\leq p(x_0+y)-p(y)-p(x_0)= x_0p'(z)-p(x_0),$$
 for some $y<z<x_0+y$. The final result follows since $p'$ is continuous on $[0,\infty)$ and $\lim_{x\to \infty} p'(x)=0$
 so that it is bounded on $[0,\infty)$.
\end{proof}

\begin{prop}\label{P:Subexpo weight II-bounded-Poly weight}
Let $G$ be a compactly generated group of polynomial growth, $\beta,\gamma>0$, and
$\om_\beta$ and $\rho_\gamma$ weights \eqref{Eq:poly weight-defn} and \eqref{Eq:Expo weight II-defn}, respectively. If $\om:=\rho_\gamma/\om_\beta$, then $\om$ is a symmetric weight on $G$ satisfying GRS-condition.

%
%
\end{prop}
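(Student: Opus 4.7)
The plan is to introduce the auxiliary function
\[
p(x) \;:=\; \frac{Cx}{(\ln(1+x))^\gamma} \;-\; \beta\ln(1+x) \qquad (x \ge 0),
\]
so that $\ln\om(s) = p(\tau(s))$, and then apply Lemma~\ref{L:difference concave functions} (with the roles of $\beta$ and $\gamma$ interchanged; the cosmetic replacement of $\ln(e+x)$ by $\ln(1+x)$ is harmless, as the derivatives have the same asymptotic signs and the proof of that lemma goes through verbatim after this change). The easy half of the conclusion then follows at once: measurability and symmetry of $\om$ come from those of $\tau$ via \eqref{Eq:lenght func-trai equality}, while $\om(e)=1$ by the convention $\tau(e)=0$. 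Local boundedness (hence local integrability) of $\om$ follows from the fact that each compact set of $G$ lies in some $U^n$ on which $\tau$ is bounded. Finally, $1/\om \in L^\infty(G)$ is a consequence of the continuity of $p$ on $[0,\infty)$ together with Lemma~\ref{L:difference concave functions}(i): since $p$ is positive on $[x_0,\infty)$, it is bounded below on all of $[0,\infty)$ by some constant $-m$, whence $\om \ge e^{-m} > 0$.

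The main step is weak submultiplicativity $\om(st) \le K \om(s)\om(t)$, equivalently $p(\tau(st)) \le p(\tau(s)) + p(\tau(t)) + \ln K$. I would use $\tau(st) \le \tau(s) + \tau(t)$ together with a case split on whether $\max\{\tau(s),\tau(t)\} \ge x_0$. In the ``large'' branch, say $\tau(s) \ge x_0$, if also $\tau(st) \ge x_0$ then the monotonicity of $p$ on $[x_0,\infty)$ from Lemma~\ref{L:difference concave functions}(i), combined with \eqref{Eq:subexpo II-poly weight subadditive}, yields
\[
p(\tau(st)) \;\le\; p(\tau(s)+\tau(t)) \;\le\; p(\tau(s)) + p(\tau(t)) + M.
\]
In the residual ranges (both $\tau(s),\tau(t) < x_0$, or $\tau(st) < x_0$) the argument $\tau(st)$ sits in the bounded interval $[0,2x_0)$, on which $p$ is uniformly bounded by some $M_1$; since $p(\tau(s))+p(\tau(t)) \ge -2m$ by the lower bound above, the required inequality holds again after enlarging the additive constant. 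Collecting the constants produces a single $K$ that works for all $s,t\in G$.

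For the GRS-condition, I would start from $\om(s^n)^{1/n} = e^{p(\tau(s^n))/n}$ and, using $\tau(s^n) \le n\tau(s)$, sandwich
\[
-\,\frac{\beta\ln(1+n\tau(s))}{n} \;\le\; \frac{p(\tau(s^n))}{n} \;\le\; \frac{C\tau(s)}{(\ln(1+\tau(s^n)))^\gamma}.
\]
Both ends tend to $0$: the lower one by the standard log-versus-linear asymptotic, and the upper one either because $\tau(s^n)\to\infty$ (so the denominator diverges while $C\tau(s)$ is fixed) or because $\tau(s^n)$ stays bounded (in which case $p(\tau(s^n))$ is itself bounded and division by $n$ wins). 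Hence $\om(s^n)^{1/n}\to 1$. The main obstacle is the case analysis for submultiplicativity: Lemma~\ref{L:difference concave functions}(iii) is only available for $x \ge x_0$, so one must patch it with the uniform bound of $p$ on $[0,2x_0]$ and with the global lower bound $-m$ to cover every combination of sizes of $\tau(s),\tau(t),\tau(st)$; once the reduction to the single-variable function $p$ is in place, everything else is a direct computation.
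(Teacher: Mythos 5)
Your proof is correct and follows essentially the same route as the paper: reduce to the one-variable function $p$ with $\ln\om = p\circ\tau$, invoke Lemma~\ref{L:difference concave functions}, and split into cases according to whether $\tau(s)$, $\tau(t)$, $\tau(st)$ exceed $x_0$, patching the small ranges with the boundedness of $p$ on $[0,2x_0]$ and the global lower bound on $p$. You in fact supply more detail than the paper (which dismisses symmetry and the GRS-condition as clear); the only cosmetic looseness is that the dichotomy ``$\tau(s^n)\to\infty$ or $\tau(s^n)$ stays bounded'' is not exhaustive for a general sequence, but the standard $\epsilon$-split between $\tau(s^n)>R$ and $\tau(s^n)\le R$ repairs it immediately.
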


\begin{proof}
It is clear that $\om$ is symmetric and satisfies GRS-condition. Hence it only remains to show that $\om$
is a weight, i.e. there is constant $K>0$ such that
\begin{align}\label{Eq:bound-poly subexpo weight}
\om(st)\leq K\om(s)\om(t)  \ \ \ \ (s,t\in G).
\end{align}
Suppose that $p$ be the function defined in Lemma \ref{L:difference concave functions}.
Then clearly 
$$\om(s)=e^{p(\tau(s))} \ \ \ (s\in G).$$
Let $x_0>0$ and $M>0$ be the constants obtained in Lemma \ref{L:difference concave functions}. Take $s,t\in G$. We consider three cases:

{\it Case I:} $\max\{\tau(s),\tau(t)\}<x_0$. Then $\tau(st)\leq \tau(s)+\tau(t)<2x_0$. Hence we
can put $K_1=\max \{ e^{p(x)-p(y)-p(z)}: x,y,z \in [0,2x_0]\}$ to obtain \eqref{Eq:bound-poly subexpo weight}.

{\it Case II:} $\max\{\tau(s),\tau(t)\}\geq x_0$ and $\tau(st)< x_0$. In this case, by \eqref{Eq:subexpo II-poly weight subadditive},
$$0<p(\tau(s)+\tau(t)) \leq p(\tau(s))+p(\tau(t))+M.$$
Hence
$$e^{p(\tau(st))}\leq K_2e^{p(\tau(s))}e^{p(\tau(t))},$$
where $K_2 =e^{M}\max \{ e^{p(x)}: x \in [0,x_0]\}$.

{\it Case III:} $\max\{\tau(s),\tau(t)\}\geq x_0$ and $\tau(st)\geq x_0$. Then, since $p$ is increasing
on $[x_0,\infty)$ (Lemma \ref{L:difference concave functions}(i)), we can apply again \eqref{Eq:subexpo II-poly weight subadditive} to get
$$e^{p(\tau(st))}\leq e^{p(\tau(s))+p(\tau(t))}\leq K_3e^{p(\tau(s))}e^{p(\tau(t))},$$
where $K_3=e^{M}$.

The final result follows if we put $K=\max \{K_1,K_2,K_3\}$.
\end{proof}

We are now ready to state the main result of this section whose proof relies on
Theorem \ref{T:twisted Orlicz alg-weak add-symm}.

\begin{thm}\label{T:twisted Orlicz alg-poly and exp weight}
Let $\Om_\T\in \ZTb$, let $\tc$ be the twisted convolution coming from $\Om_\T$. Then:\\
$(i)$ If  $\sg_{\alpha,C}$ is the subexponential weight defined in \eqref{Eq:Expo weight-defn} with $0<\alpha <1$, then $(L_{\sg_{\alpha,C}}^\Phi(G), \tc)$ is a symmetric twisted Orlicz algebra.\\
$(ii)$ If $\rho_{\gamma,C}$ is the subexponential weight defined
in \eqref{Eq:Expo weight II-defn} with $\gamma>0$, then $(L_{\rho_{\gamma,C}}^\Phi(G), \tc)$ is a symmetric twisted Orlicz algebra.
\end{thm}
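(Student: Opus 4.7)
The plan is to deduce both parts from Theorem \ref{T:twisted Orlicz alg-weak add-symm}, with $\sg$ set to the given subexponential weight and $\om := \om_\beta$ a polynomial weight of sufficiently high order $\beta$; the symmetry of the resulting differential $*$-subalgebra will then be reduced, via Theorem \ref{T:symm-twisted group alg}, to checking the GRS-condition for the auxiliary weight $\rho := \sg/\om_\beta$. I note in passing that $G$, being a compactly generated group of polynomial growth, is automatically unimodular, so the hypotheses of Theorem \ref{T:twisted Orlicz alg-weak add-symm} are available.

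First I would fix $\beta$ large enough to guarantee three things at once. The integrability requirement $1/\om_\beta \in L^\Psi(G)$ is handled by the argument of Corollary \ref{C:twisted Orlicz alg-poly weight} with $l = 1$ (always admissible since $\Psi(x)/x$ is bounded near $0$), which needs $\beta > d(G)$. That $\rho := \sg/\om_\beta$ is again a weight is, in case (i), the content of \cite[Theorem 3.3]{LSS} once $\beta$ exceeds a threshold depending on $\alpha$, $C$ and $d(G)$; in case (ii) it is Proposition \ref{P:Subexpo weight II-bounded-Poly weight}, valid for every $\beta > 0$. Both $\sg$ and $\om_\beta$ are manifestly symmetric, and $\om_\beta$ is weakly subadditive with constant $2^\beta$. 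The hypothesis \eqref{Eq:2-cocycle bdd by subadditive weight} is exactly the statement that $\rho$ is a weight, which is now arranged; consequently Theorem \ref{T:twisted Orlicz alg-weak add-symm} realizes $(L^\Phi_\sg(G), \tc)$ as a differential $*$-subalgebra of $(L^1_\rho(G), \tc)$.

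To conclude, I must show that $(L^1_\rho(G), \tc)$ is symmetric; by Theorem \ref{T:symm-twisted group alg} this reduces to the GRS estimate $\rho(s^n)^{1/n} \to 1$ for every $s \in G$. Using $\tau(s^n) \leq n\tau(s)$, in case (i) we bound
\[|\ln \rho(s^n)|/n \leq C\tau(s)^\alpha n^{\alpha - 1} + \beta \ln(1+n\tau(s))/n \longrightarrow 0,\]
since $0 < \alpha < 1$; the analogous computation for case (ii), using $C\tau(s^n)/(n(\ln(1+\tau(s^n)))^\gamma) \leq C\tau(s)/(\ln(1+\tau(s^n)))^\gamma$ when $\tau(s^n) \to \infty$, disposes of $\rho_{\gamma, C}/\om_\beta$, and the case $\tau(s^n)$ bounded is trivial. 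No serious obstacle should arise; the main technical input has already been absorbed into \cite[Theorem 3.3]{LSS}, Proposition \ref{P:Subexpo weight II-bounded-Poly weight}, and Theorem \ref{T:twisted Orlicz alg-weak add-symm}, and the only minor subtlety in case (i) is to choose $\beta$ above both of the (compatible) lower bounds arising from $L^\Psi$-integrability and the quotient-weight threshold.
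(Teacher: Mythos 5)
Your proposal is correct and follows essentially the same route as the paper: fix a polynomial weight $\om_\beta$ with $\beta$ exceeding both the $L^\Psi$-integrability threshold from Corollary \ref{C:twisted Orlicz alg-poly weight} and the quotient-weight threshold from \cite[Theorem 3.3]{LSS} (resp.\ Proposition \ref{P:Subexpo weight II-bounded-Poly weight}), feed this into Theorem \ref{T:twisted Orlicz alg-weak add-symm}, and conclude via the GRS-condition for $\rho=\sg/\om_\beta$ and Theorem \ref{T:symm-twisted group alg}. The only (harmless) difference is that you verify the GRS-condition for $\rho$ by direct estimate, while the paper simply observes that $\sg$ and $\om_\beta$ each satisfy it and hence so does their quotient.
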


\begin{proof}
(i) Fix $0<\alpha<1$ and $C>0$. 
Suppose that $\beta$ is a positive number satisfying
$$\beta > \max \{1,\frac{6}{C\alpha(1-\alpha)},d(G)\},$$ where $d(G)$ is the degree of the growth of $G$.
As it is shown in the proof of Corollary \ref{C:twisted Orlicz alg-poly weight}, $1/\om_\beta \in L^\Psi(G)$. Moreover, by \cite[Theorem 3.3]{LSS}, there is a positive number $M$ (depending only on $\alpha$, $\beta$, and $C$) such that
\begin{eqnarray}\label{Eq:subexpo mod poly weight-II}
\frac{\sg_{\alpha,C}(st)}{\sg_{\alpha,C}(s)\sg_{\alpha,C}(t)}\leq
\frac{M\om_\beta(st)}{\om_\beta(s)\om_\beta(t)} \ \ \ (s,t\in G).
\end{eqnarray}
But $\om_\beta$ is weakly subadditive. In fact,
$$\om_\beta(st) \leq 2^\beta \Big(\om_\beta(s)+\om_\beta(t)\Big) \ \ (s,t\in G).$$
Therefore
$$\frac{\sg_{\alpha,C}(st)}{\sg_{\alpha,C}(s)\sg_{\alpha,C}(t)}\leq
\frac{2^\beta M}{\om_\beta(s)}+\frac{2^\beta M}{\om_\beta(t)} \ \ \ (s,t\in G).$$
Hence if we put
$$u=\frac{2^\beta M}{\om_\beta},$$
then $u\in L^\Psi(G)$ with
$$\frac{\sg_{\alpha,C}(st)}{\sg_{\alpha,C}(s)\sg_{\alpha,C}(t)}\leq
u(s)+u(t) \ \ \ (s,t\in G).$$
Now suppose that $\Om\in \Zbw$ is a 2-cocycle for which $\sg_{\alpha,C}$ is a weight associated to
$|\Om|$. Thus the preceding inequality is nothing but
$$|\Om(s,t)| \leq u(s)+u(t) \ \ \ (s,t\in G).$$
This is a particular case of the relation \eqref{Eq:2-cocycle bdd sum}, and so, by Theorem \ref{T:twisted Orlicz alg}, $(L^\Phi(G),\tw)$ is an algebra. In fact, we have that (see the relation \eqref{Eq:twisted convolution-diff norm relation} in Theorem \ref{T:twisted Orlicz alg})
for every $f,g\in L^\Phi(G)$
$$\|f\tw g\|_{\Phi} \leq \|fu\|_{1} \|g\|_{\Phi}+\|f\|_{\Phi}\|gu\|_{1}\leq C\|f\|_\Phi \|g\|_\Phi,$$
where $C=2 N_\Psi(u)$ (the last inequality follows from Holder's inequality \eqref{Eq:Holder inequality}).
Alternatively, if we apply Lemma \ref{L:twist covn-weight conv} and use the equivalent weighted reformulation, we get
\begin{eqnarray*}
\|f\tc g\|_{\Phi,\sg_{\alpha, C}} &\leq& C\|f\sg_{\alpha, C}\|_{\Phi}\|g\sg_{\alpha, C}\|_{\Phi} \\
&=& C\|f\|_{\Phi,\sg_{\alpha, C} }\|g\|_{\Phi,\sg_{\alpha, C}},
\end{eqnarray*}
for every $f,g\in L^\Phi_{\sg,\alpha}(G)$. That is $(L^\Phi_{\sg_{\alpha, C}}(G),\tc)$ is an algebra. Moreover,
by putting $\sg:=\sg_{\alpha,C}$, $\om:=\om_\beta$, and $\rho:=\sg_{\alpha,C}/\om_\beta$ and applying Theorem \ref{T:twisted Orlicz alg-weak add-symm}, we have that
$\rho$ is a symmetric weight on $G$ (see the proof of Theorem \ref{T:twisted Orlicz alg-weak add-symm} and, in particular, compare \eqref{Eq:2-cocycle bdd by subadditive weight} with \eqref{Eq:subexpo mod poly weight-II}) and $(L_{\sg_{\alpha,C}}^\Phi(G), \tc)$ is a differential $*$-subalgebra of $(L^1_\rho(G),\tc)$.
On the other hand, since both $\sg_{\alpha,C}$ and $\om_\beta$ satisfy the GRS-condition, then
so does $\rho$. Therefore, by Theorem \ref{T:symm-twisted group alg}, $(L^1_\rho(G),\tc)$
is symmetric, and so, $(L_{\sg_{\alpha,C}}^\Phi(G), \tc)$ is symmetric, once again from Theorem
\ref{T:twisted Orlicz alg-weak add-symm}.\\
(ii) In line of Proposition \ref{P:Subexpo weight II-bounded-Poly weight}, the proof is similar to part (ii).
\end{proof}

\subsection{Functional calculus, Wiener property, and minimal ideals}
\label{S:Functional calculus, Wiener property, and minimal ideals}

Let $G$ be a compactly generated group of polynomial growth, and let $\om$ be a symmetric weight on $G$.
In \cite[Sections 4-8]{KM}, the authors investigated other important and related properties of the Banach $*$-algebra $(L^p_\om(G),*)$ such as existence
of $C^\infty$-functional calculus on compactly supported self-adjoint elements of $(L^p_\om(G),*)$, regularity, having
(weak) Wiener property and existence of the minimal ideal associated to a given closed subset of the dual of $(L^p_\om(G),*)$.
Their approach is to apply what is known about these properties as well as the techniques in the case of $(L^1_\om(G),*)$
to their setting. In fact, when one examines carefully the arguments presented in \cite[Sections 4-8]{KM}, one realizes that to obtain their results,
it suffices that following holds for the Banach $*$-algebra $(L^p_\om(G),*)$:\\

$(i)$ $(L^p_\om(G),*)$ is a symmetric subalgebra of $(L^1(G),*)$;

$(ii)$ $(L^p_\om(G),*)$ is an essential Banach $(L^1_\om(G),*)$-bimodule;

$(iii)$ $(L^1_\om(G),*)$ has a bounded approximate identity consists of compactly supported selfadjoint elements;

$(iv)$ there is an appropriate $C^\infty$-functional calculus on compactly supported self-adjoint elements of $(L^1_\om(G),*)$.\\

Now suppose that $\Om\in \Zbw$ and $\om$ is a symmetric weight associated to $|\Om|$.
If $\om$ satisfies the GRS-condition, then, by Theorem \ref{T:symm-twisted group alg}, both $(L^1(G),\tc)$ and $(L^1_\om(G),\tc)$ are symmetric,
where $\tc$ is the twisted convolution coming from $\Om_\T$.
If, in addition, we have that $(L^\Phi(G), \tw)$ is a symmetric twisted Orlicz algebra, then, by Lemma \ref{L:Orlicz L1 mod} and
Theorem \ref{T:twisted Orlicz alg}, $(\Sm^\Phi_\om(G),\tc)$ satisfies the assumptions
(i) and (ii) above, where here we are replacing $(L^p_\om(G),*)$ with
$(\Sm^\Phi_\om(G),\tc)$ and the (untwisted) convolution $*$ with the twisted convolution $\tc$.
Furthermore, by Lemma \ref{L:twist covn-weight conv}, Proposition \ref{P:symm-twisted group alg} and \cite{DLM}, $(L^1_\om(G),\tc)$ satisfies the assumption (iii) and
(iv) above.
Hence all the results presented in \cite[Sections 4-8]{KM} remains valid in our setting if we replace $(L^p_\om(G),*)$ with
$(\Sm^\Phi_\om(G),\tc)$. In particular, this applies to all the cases  considered in Sections \ref{S:Symm-twisted Orlicz alg-PG groups} and \ref{S:subexp weights-symm Orlicz space}.
Since one does not need any further major argument to achieve this (beyond what we explained above), we do not present anything further and just
refer the interested reader to \cite[Sections 4-8]{KM} and \cite{DLM} (see also \cite{FGLLM}).


\section{Acknowledgements}
This work was initiated and completed while the second named author visited Istanbul University under a fellowship from Tubitak for visiting scientists and scientists on sabbatical leave. The authors would like to acknowledge the generous support of Tubitak. The first named author would like to express his deep gratitude toward his host, Serap
\"{O}ztop, whose hospitality and support made his year-long stay at Istanbul University very enjoyable.
The second named author also benefitted from correspondence with M. Leinert, whom he is deeply grateful, for clarification about the central extension of locally compact groups presented in Section \ref{S:Symmetry of twisted group algebras} as well as providing the reference \cite{L}. The authors would like to thank the referee for carefully reading the paper as well as providing the reference \cite{JW}.


\begin{thebibliography}{99}


\bibitem{DU} J. Diestel and J.J. Uhl, Jr, {\it Vector Measures}, American Mathematical Society, 1977.

\bibitem{DLM} J. Dziubanski, J. Ludwig, C. Molitor-Braun, {\it Functional calculus in weighted group algebras}, Rev. Mat. Complut. 17 (2004), no. 2, 321-357.

\bibitem{EL} C. M. Edwards and T. J. Lewis, {\it Twisted group algebras. I}, Comm. Math. Phys. 13 1969 119-130.

\bibitem{FGL} G. Fendler, K. Gr\"{o}chenig, M. Leinert, {\it Symmetry of weighted $L^1$-algebras and the GRS-condition}, Bull. London Math. Soc. 38 (2006), no. 4, 625-635.

\bibitem{FGLLM} G. Fendler, K. Gr\"{o}chenig, M. Leinert, J. Ludwig, C. Molitor-Braun,
{\it Weighted group algebras on groups of polynomial growth}, Math. Z. {\bf 245} (2003), no. 4, 791-821.

\bibitem{GK} K. Gr\"{o}chenig and A. Klotz, {\it Norm-controlled inversion in smooth Banach algebras, I}, J. Lond. Math. Soc. (2) 88 (2013), no. 1, 49-64.

\bibitem{GL} K. Gr\"{o}chenig and M. Leinert, {\it Wiener's lemma for twisted convolution and Gabor frames}, J. Amer. Math. Soc. 17 (2004), no. 1, 1-18.

\bibitem{HKM} H. Hudzik, A. Kami\'{n}ska and J. Musielak, {\it On some Banach algebras given by a modular�}, in:
Alfred Haar Memorial Conference, Budapest, Colloquia Mathematica Societatis J\'{a}nos Bolyai, 49 (North-Holland, Amsterdam, 1987), 445-463.

\bibitem{Hul} A. Hulanicki, {\it On the spectrum of convolution operators on groups with polynomial growth}, Invent. Math. 17 (1972) 135-142.




\bibitem{LSS} H. H. Lee, E. Samei, N. Spronk, {\it Some weighted group algebras are operator algebras}, Proc. Edinb. Math. Soc. (2) 58 (2015), no. 2, 499-519.

\bibitem{L} M. Leinert, {\it Fell-B\"{u}ndel und verallgemeinerte $L^1$-Algebren}, J. Func. Analysis 22 (1976), no. 4, 323-345.

\bibitem{Lep} H. Leptin, {\it Verallgemeinerte $L^1$-Algebren}. (German)
Math. Ann. 159 1965 51-76.

\bibitem{Los} V. Losert, {\it On the structure of groups with polynomial growth II}, J. London Math. Soc. 63, (2001) 640-654.


\bibitem{K1}  Yu. N. Kuznetsova, {\it  Weighted $L^p$-algebras on groups}. (Russian) Funktsional. Anal. i Prilozhen. 40 (2006), no. 3, 82--85; translation in Funct. Anal. Appl. 40 (2006), no. 3, 234-236 46J10.

\bibitem{K2} Yu. N. Kuznetsova, {\it The invariant weighted algebras $L^p(G,\om)$}. (Russian) Mat. Zametki 84 (2008), no. 4, 567-576; translation in Math. Notes 84 (2008), no. 3-4, 529-537

\bibitem{KM} Yu. N. Kuznetsova and C. Molitor-Braun, {\it Harmonic analysis of weighted $L^p$-algebras}, Expo. Math. 30 (2012), no. 2, 124-153.

\bibitem{M} G. W. Mackey, {\it Borel structure in groups and their duals}, Trans. Amer. Math. Soc. 85 (1957), 134-165.

\bibitem{ML} W. A. Majewski and L. E. Labuschagne, {\it On applications of Orlicz spaces to statistical physics}, Ann. Henri Poincar\'{e} 15 (2014), no. 6, 1197-1221.

\bibitem{OO} A. Osan\c{c}l{\i}ol and S. \"{O}ztop, {\it Weighted Orlicz algebras on locally compact groups},  J. Aust. Math. Soc. 99 (2015), no. 3, 399-414.

\bibitem{Pal} T. W. Palmer, {\it Banach algebras and general theorem of *-algebras}, II. Cambrige University Press, 2001.

\bibitem{P} T. Pytlik, {\it Symbolic calculus on weighted group algebras}, Studia Math. LXIII, (1982) 169-176.

\bibitem{rao} M. M. Rao and Z. D. Ren, \emph{Theory of Orlicz Spaces}
(Marcel Dekker, New York, 1991).

\bibitem{rao2} M. M. Rao, {\it Real and Stocastic Analysis, New Perspectives},
Birkh\"{a}user, 2004.

\bibitem{S} S. Saeki, {\it The $L^p$-conjecture and Young's inequality},
Illinois J. Math. 34 (1990), no. 3, 614-627.

\bibitem{JW} J. Wermer, {\it On a class of normed rings}, Ark. Mat. 2, (1954). 537-551.

\bibitem{W} M. C. White, {\it Characters on weighted amenable groups}, Bull. London Math. Soc. 23 (1991), no. 4, 375-380.


\end{thebibliography}
\end{document}